\documentclass[11pt,letterpaper]{amsart}
\usepackage[english]{babel}
\usepackage{amsmath,amssymb}%,xcolor,latexsym,theorem}
\usepackage{comment}
\usepackage{enumitem}
\usepackage{amsthm}
\usepackage{xcolor}
\usepackage{amscd}
\usepackage{graphicx}
\newtheorem{theorem}{Theorem}
\newtheorem{lemma}[theorem]{Lemma}

\newtheorem{corollary}[theorem]{Corollary}
\newtheorem{proposition}[theorem]{Proposition}
\newtheorem{conjecture}[theorem]{Conjecture}

\newtheorem*{defn}{Definition}

\newtheorem*{remark}{Remark}

\usepackage{blindtext}

\title{Cellular waists of hyperbolic spaces}

\author{Grigori Avramidi and Thomas Delzant}

\input xy
\xyoption{all}
\begin{document}
\begin{abstract}
We find lower bounds on the topological complexity of fibers of PL and generic smooth maps $p:M^d\rightarrow\mathbb R^m$, where $M^d$ is a closed hyperbolic manifold of large injectivity radius. More precisely, we show that if the injectivity radius of $M$ is greater than $50\log((n+1)!)$, then for each dimension $0<k<d-m$ there is a point $z\in\mathbb R^m$ such that any cell structure on the fiber $p^{-1}(z)$ has more than $n$ cells of dimension $k$. The proof is based on the freedom theorem for ideals in group rings of hyperbolic groups proved in \cite{avramididelzanthyperbolic}.

%For a closed hyperbolic $d$-manifold $M^d$ of large injectivity radius and a PL map $p:M^d\rightarrow\mathbb R^m$ to a Euclidean space of dimension $m\leq d-2$, we use the freedom theorem for ideals proved in \cite{avramididelzanthyperbolic} to find a topologically complicated fiber. More precisely, we show that if the injectivity radius of $M$ is greater than $50\cdot\log((n+1)!)$, then for each $0<k<d-m$ there is a point $z\in\mathbb R^m$ such that any cell structure on the fiber $p^{-1}(z)$ has more than $n$ $k$-cells. 
\end{abstract}

\maketitle
\section{Introduction}
The goal of this paper is to obtain lower bounds on the topological complexity of a fiber of a map $M^d\rightarrow \mathbb R^m$ whenever $M$ is a closed hyperbolic manifold. Our study is motivated by a query of Guth (page 5 of \cite{guthwaist}) asking whether a map from a closed, arithmetic hyperbolic $7$-manifold to $\mathbb R^2$ must have a topologically complicated fiber.

The measures of topological complexity we use are minimum numbers of $k$-cells (for a fixed dimension $k$) in a cell structure. The main tool is the freedom theorem for ideals in group rings of hyperbolic groups we proved in \cite{avramididelzanthyperbolic}. In the special case of a %fundamental group of a closed 
hyperbolic manifold $M$ of large injectivity radius ($\geq 50\log((n+1)!)$) it shows, for a field $\mathbb K$, that $n$-generated ideals in the group ring $\mathbb K[\pi_1M]$ are free $\mathbb K[\pi_1M]$-modules. A ring with this property is called an {\it $n$-fir} (\cite{cohnbook}). %A ring all of whose ideals are free is called a free ideal ring---or {\it fir} for short--- and a ring all of whose $n$-generated ideals are free is called an {\it $n$-fir} (\cite{cohnbook}). 
The $n$-fir property leads to the following result.
%This property has topological consequences, e.g. the proof of Theorem 40 in \cite{avramididelzanthyperbolic} gives the following deformation result. %for any field $\mathbb K$.

%\subsection*{Notation:} Fix a field $\mathbb K$, a group $G$ and a cell complex model for $BG$. To simplify notation, $C_*(-)$ will stand for the cellular chain complex with coefficients in $\mathbb K[G]$. 

%\begin{remark}
%The bullet point implies $n$-generated submodules of free $\mathbb K[G]$-modules are free by the argument in Theorem 33.2 of \cite{avramididelzanthyperbolic}. 
%\end{remark}

% (see Theorem 40 in \cite{adhyperbolic}).
\begin{theorem}[see proof of Thm. 40 in \cite{avramididelzanthyperbolic}]
\label{essentialtheorem}
%For any $0<k<cd_{\mathbb K}(G)$,
Suppose $\mathbb K[G]$ is an $n$-fir.
If $X$ is a cell complex with $\leq n$ $k$-cells, then any continuous map $X\rightarrow BG$ is chain homotopic to a chain map $C_*(X;\mathbb K[G])\rightarrow C_*(BG;\mathbb K[G])$ that is zero above degree $k$.  
\end{theorem}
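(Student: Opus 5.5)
Let $f:X\to BG$ be the map, which we may take to be cellular, and let $\tilde X\to X$ be the pullback of the universal cover $EG\to BG$. The induced $\mathbb K[G]$-chain map
\[
f_*:C_*(X;\mathbb K[G])=C_*(\tilde X)\longrightarrow C_*(EG)=C_*(BG;\mathbb K[G])
\]
is then a map of complexes of free $\mathbb K[G]$-modules, and $C_k(X;\mathbb K[G])$ is free of rank at most $n$. The target $C_*(EG)$ is a free resolution of $\mathbb K$, so it is acyclic in positive degrees. The plan is to replace $f_*$ by a chain-homotopic map that vanishes in degrees $>k$. We do this by modifying $f_*$ in degree $k$ so that it kills the cycles coming from degree $k+1$, and then extending by zero.

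Let $B=\partial C_{k+1}(X)\subset C_k(X)$ be the image of the $(k+1)$-boundary. Since $f_*$ is a chain map, $f_k(B)\subset \partial C_{k+1}(EG)$. First we show that $B$ is a direct summand of the free module $C_k(X)$, or at least that $f_k|_B$ factors through something free and splittable.

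\textbf{Key use of the $n$-fir hypothesis.} $B$ is a submodule of a free module of rank $\le n$. By Cohn's theory of $n$-firs (Theorem 33.2 of \cite{avramididelzanthyperbolic}), any submodule of $\mathbb K[G]^n$ generated by at most $n$ elements is free. But $B$ need not be $n$-generated. Instead, consider $Z=\ker\partial_k\subset C_k(X)$, or better, the relevant quotient. We want to construct a homotopy $h:C_k(X)\to C_{k+1}(EG)$ with $f_k-\partial h$ vanishing on $B$. Then $g_k=f_k-\partial h$, $g_{k-1}=f_{k-1}$ on lower degrees and $g_j=0$ for $j>k$ is a chain map, chain homotopic to $f_*$ via $h$ in degree $k$ (with $f_{j}$ for $j>k$ homotoped to $0$ by the usual acyclic-target argument).

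To build $h$, look at the image $I=f_k(C_k(X))\cap \partial C_{k+1}(EG)$, or more concretely the submodule $f_k(B)\subseteq \partial C_{k+1}(EG)$. This submodule is generated by at most $n$ elements only if $B$ is. So the real trick is to work with $f_k(C_k(X))$, which is $n$-generated inside the free module $C_k(EG)$ and therefore free of rank $\le n$ by the $n$-fir property. Intersecting with boundaries and using that $n$-firs are closed under such operations (for example, $n$-generated submodules of free modules are free and the relevant intersection/sum is again $\le n$-generated, by Cohn's "intersection and sum" lemma for $n$-firs), we obtain a free module that the map from $C_k(X)$ factors through. Projectivity then lets us lift $f_k|$ through $\partial:C_{k+1}(EG)\to\partial C_{k+1}(EG)$ on a complement, which defines $h$.

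\textbf{Expected obstacle.} The main obstacle is this algebraic step: showing that the composite $C_k(X)\to C_k(EG)$, restricted to the part landing in boundaries, can be lifted to $C_{k+1}(EG)$ compatibly with $\partial$. This requires checking that the relevant image or intersection modules are $n$-generated, so that the $n$-fir property makes them free, hence projective, and the lift exists. I would first try to prove that $f_k(C_k(X))\cap \partial C_{k+1}(EG)$ is free, using the $n$-fir property that sums and intersections of $n$-generated submodules behave well.
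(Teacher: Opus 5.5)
There is a genuine gap, located at the step you flag as the ``expected obstacle'': you never identify an $n$-generated module whose freeness does the work.

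The module you propose, $f_k(C_k(X))\cap\partial C_{k+1}(EG)$, has no reason to be $n$-generated. $\partial C_{k+1}(EG)$ is generated by the boundaries of all $(k+1)$-cells of $BG$, typically far more than $n$. The $n$-fir property gives no control over the intersection of an $n$-generated submodule with an arbitrary one; Cohn-type sum/intersection statements need both pieces to be small-generated. Even if that intersection were free, you would only get a lift on the submodule $f_k^{-1}(\partial C_{k+1}(EG))\subset C_k(X)$. Extending it to an $h$ on all of $C_k(X)$ would still require that submodule to be a direct summand, which you do not establish. The freeness of $f_k(C_k(X))$, which you do prove, is never put to use.

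The missing idea is to look one degree down, on the source side. The module $\partial C_k(X)\subset C_{k-1}(X)$ is generated by the boundaries of the at most $n$ $k$-cells, so it is free by the $n$-fir property (Lemma~\ref{freelemma}). Since $f_*$ is a chain map, $f_{k-1}$ sends $\partial C_k(X)$ into $\partial C_k(BG)$. Freeness then gives a lift $b:\partial C_k(X)\to C_k(BG)$ with $\partial b=f_{k-1}|_{\partial C_k(X)}$. Now define $g_j=f_j$ for $j<k$, $g_k=b\circ\partial$, and $g_j=0$ for $j>k$. This is a chain map, since $\partial g_k=f_{k-1}\partial$ and $g_k\partial=b\partial^2=0$.

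Note that $g_k$ kills all $k$-cycles, not just $B=\partial C_{k+1}(X)$. So you never need $B$ to be $n$-generated or a summand. The chain homotopy to $f_*$ is then automatic: two augmented chain maps from a free complex into the resolution $C_*(EG)$ that agree in degree $-1$ are chain homotopic (Lemma~\ref{chainhomotopiclemma}). In particular your $h$ exists a posteriori, because $f_k-g_k$ lands in cycles.

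If you prefer to stay in your own framework, use $W=\ker(f_{k-1}\partial_k)$ instead of the intersection in $C_k(EG)$. Then $C_k(X)/W\cong f_{k-1}(\partial C_k(X))$ is an $n$-generated submodule of the free module $C_{k-1}(BG)$, hence free. So $W$ is a projective direct summand of $C_k(X)$ containing all cycles, and $f_k|_W$, which lands in $\ker\partial_k=\partial C_{k+1}(EG)$, lifts through $\partial$. In either version, the essential point is an image in degree $k-1$ of the rank-$\le n$ module $C_k(X)$, not anything in degree $k$ of $BG$.
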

%\begin{remark}
%In this theorem and in the rest of the paper, we fix a cell complex model for $BG$.
%\end{remark}

In the present paper, we prove a version of this theorem for families.

\begin{theorem}
\label{maptheorem}
 Let $\mathbb K[G]$ be an $n$-fir. Let $p:X\rightarrow Z$ be a PL map of simplicial complexes, and fix an integer $k>0$. Suppose that for every point $z\in Z$, there is a cell structure on $p^{-1}(z)$ with at most $n$ $k$-cells.  Then for every continuous map 
 $$
 \phi:X\rightarrow BG
 $$ 
 there exists a chain map 
 $$
 C_*(X;\mathbb K[G])\rightarrow C_*(BG;\mathbb K[G])
 $$
 which is chain homotopic to the map induced by $\phi$ and which is zero in all degrees larger than $k+\dim Z$. %that is zero above degree $k+\dim (Z)$. 
\end{theorem}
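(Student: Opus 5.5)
We plan to use the simplicial structure of $Z$ to cut $X$ into fiber-like pieces, apply Theorem \ref{essentialtheorem} to each piece, and glue the resulting chain homotopies by induction over the skeleta of $Z$. After subdividing, we may assume $p$ is simplicial. For each simplex $\sigma$ of $Z$ with barycenter $b_\sigma$, consider the dual block $D(\sigma)$ in the barycentric subdivision $Z'$ and its preimage $X_\sigma=p^{-1}(D(\sigma))$. Since $p$ is simplicial, $p^{-1}(\mathrm{int}\,D(\sigma))$ is PL homeomorphic to $p^{-1}(b_\sigma)\times \mathrm{int}\,D(\sigma)$: over the open dual cell the map is a product, by the standard local triviality of simplicial maps over open simplices of the first derived. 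So $X$ is filtered by closed subcomplexes $X^{(j)}=p^{-1}(\bigcup_{\dim D(\sigma)\le j}D(\sigma))$, and $X^{(j)}/X^{(j-1)}$ is a wedge, over simplices with $\operatorname{codim}\sigma=j$, of Thom-space-like pieces $F_\sigma\times (D^j,\partial D^j)$, where $F_\sigma=p^{-1}(b_\sigma)$.

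The hypothesis gives each fiber $F_\sigma$ a cell structure with at most $n$ $k$-cells. We would then rebuild $X$, up to homotopy equivalence, as a cell complex $Y$ whose cells are products $e\times D(\sigma)$, with $e$ a cell of $F_\sigma$, attached inductively. Concretely, we replace each $p^{-1}(D(\sigma))\simeq F_\sigma\times D(\sigma)$ by the chosen cell structure times the cell $D(\sigma)$, gluing along the boundary via cellular approximations. Then $C_*(Y;\mathbb K[G])$ carries a filtration with associated graded $\bigoplus_\sigma C_*(F_\sigma)\otimes C_*(D(\sigma),\partial D(\sigma))$, i.e.\ the chains of $F_\sigma$ shifted up by $\dim D(\sigma)\le \dim Z$.

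Next we apply Theorem \ref{essentialtheorem} to $\phi|_{F_\sigma}$. Each restriction is chain homotopic to a chain map vanishing above degree $k$. The relevant part of the proof of Theorem \ref{essentialtheorem} is the mechanism: the $n$-fir property makes the image of the $k$-chains' boundary free, hence splits off the top part. We need a \emph{relative} version: given a chain map on $C_*(Y^{(j)})$ already vanishing above $k+j-1$ on $Y^{(j-1)}$, extend the homotopy over the new cells $F_\sigma\times D(\sigma)$ so that the map vanishes above $k+j$. We would prove this by induction on $j$. The new chains are $C_*(F_\sigma)$ shifted by $j$. The obstruction to pushing the map down lives on the shifted fiber chains, and there we can apply the fiber argument: the $n$-fir property applied to $\partial(C_k(F_\sigma))$, with at most $n$ generators, produces a free summand, allowing us to kill degrees above $k$ of the fiber, hence above $k+j$ after shifting. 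The homotopy is then extended from $Y^{(j-1)}$ using freeness of the cellular chain modules, since extension of chain homotopies over free relative complexes is unobstructed. Taking $j=\dim Z$ gives the theorem.

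The main obstacle is this relative/gluing step. Theorem \ref{essentialtheorem} as stated is absolute, and we must check that its proof (constructing a splitting $C_k=\ker\oplus$ free complement, then a chain retraction) can be carried out compatibly with a prescribed map on a subcomplex, and that the attaching maps of the product cells do not reintroduce high-degree components. We would first attempt to handle this purely algebraically: work with the mapping cone of $C_*(Y^{(j-1)})\to C_*(Y^{(j)})$, a complex built from fiber chains with at most $n$ generators in the relevant degree, and apply the $n$-fir splitting argument directly there.
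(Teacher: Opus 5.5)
\textbf{Verdict: the proposal has a genuine gap.} The gluing step is not carried out, and the inductive hypothesis you propose is too weak to carry it out. There is also a false claim in the decomposition.

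\textbf{The decomposition.} The product structure you assert over open dual blocks is false. The open dual block of $\sigma$ meets $\tau^\circ$ for every $\tau\ge\sigma$. The fibres of $p$ over these are the generally different spaces $X_\tau$. Local triviality holds over open simplices of $Z$ (or of its derived), not over unions of them.

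For example, let $X$ be the cone on $\partial\Delta^2$, mapped simplicially to $Z=[0,1]$ with apex $\mapsto 0$ and base $\mapsto 1$. The fibre over $b_0=0$ is a point, but the fibres over the rest of $D(0)=[0,\tfrac12]$ are circles. Accordingly, $(p^{-1}(D(0)),p^{-1}(\dot D(0)))$ is (disk, circle), with relative homology in degree $2$. A piece $F_0\times(D^1,\partial D^1)$ would have it in degree $1$.

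This part is repairable. Filter instead by $X_i=p^{-1}(Z_i)$: here $p^{-1}(\sigma^\circ)\cong\sigma^\circ\times X_\sigma$, and $X_{i-1}$ has a mapping cylinder neighbourhood in $X_i$. This gives a model $X'$ with cells $\sigma\times e$, shifted by $|\sigma|\le\dim Z$, which is what the paper does.

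\textbf{The gluing step.} This is the real gap. Write $\partial(\sigma\times c)=\varphi(\partial\sigma\times c)\pm\sigma\times\partial c$.
\begin{enumerate}
\item For $F$ to vanish on $\sigma\times C_{k+1}(X_\sigma)$, the map $F(\sigma\times-)$ on $C_k(X_\sigma)$ must kill boundaries.
\item To build it by lifting $F\partial(\sigma\times-)$ you need a free domain. $C_k/B_k$ is not free, but $C_k/Z_k\cong\partial C_k(X_\sigma)$ is, by the $n$-fir property.
\item So you need $F\varphi(\partial\sigma\times z)=0$ for every $k$-cycle $z$ of $X_\sigma$.
\end{enumerate}
But $\varphi(\partial\sigma\times z)$ sits in total degree $k+|\sigma|-1$, exactly where your hypothesis (vanishing above $k+j-1$ on the previous stage) says nothing. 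Truncating each $\phi|_{X_\sigma}$ separately via Theorem~\ref{essentialtheorem} does not help, because those truncations need not be compatible with the attaching maps.

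\textbf{The missing idea.} You need a \emph{fibrewise} invariant: $F(\sigma\times c)=0$ whenever $|c|>k$, or whenever $|c|=k$ and $c$ is a cycle. Combine it with Lemma~\ref{cyclelemma}: for a cycle $c$, the components $c_\tau$ of $\varphi(\partial\sigma\times c)$ with $|\tau|=|\sigma|-1$ are again cycles. The remaining components have fibre degree $>k$. The induction then closes as follows:
\begin{enumerate}
\item $F\partial(\sigma\times-)$ kills $k$-cycles.
\item It therefore factors through the free module $\partial C_k(X_\sigma)$. This uses the hypothesis on every fibre, not just at vertices.
\item It lifts by asphericity of $BG$.
\item The lift $b_\sigma\circ\partial$ again kills cycles.
\end{enumerate}

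\textbf{Homotopies.} You do not need to track homotopies at all. Precompose $F$ with the chain map of a homotopy equivalence $X\to X'$ rel $X_0$. The result is automatically chain homotopic to the map induced by $\phi$, since both agree on $C_{-1}$ and $C_*(BG)$ is a free resolution of $\mathbb K$ (Lemma~\ref{chainhomotopiclemma}).
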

\begin{remark}
In \cite{gromovexpanders}, section 2.4, Gromov proved, for $k=1$, an analogous result assuming that every $n$-generated subgroup of $G$ is a free group.
\end{remark}
Theorem \ref{essentialtheorem} lets one deform (on the chain complex level) a map $X\rightarrow BG$ into the $k$-skeleton if the domain has low $k$-dimensional complexity. Theorem \ref{maptheorem} lets one deform a $Z$-parameter family of such maps into the $(k+\dim(Z))$-skeleton. When there is an obstruction to such a deformation, we obtain a $k$-cellular ``waist inequality'' as a corollary.

%\footnote{The possibility of obtaining a cellular waist inequality for higher dimensional cells via the methods of \cite{avramididelzanthyperbolic} was suggested by Gromov in a talk he gave in Cambridge in 2025.}.
\begin{corollary}
\label{hypmanifoldcor}
Suppose $M^d$ is a closed hyperbolic $d$-manifold with injectivity radius $>50\log((n+1)!)$ and $p:M^d\rightarrow \mathbb R^m$ is a PL map. Then for each $0<k<d-m$ there is $z\in\mathbb R^m$ such that any cell structure on $p^{-1}(z)$ has more than $n$ $k$-cells. %Pick a triangulation $T$ of $\mathbb R^m$ and a cell structure on $M^d$ such that for every simplex $\sigma$ in $T$, $p^{-1}(St_{b^2T}(\nu_{\sigma}))$ is a subcomplex of $M$. Then, there is a simplex $\sigma$ in $T$ for which $p^{-1}(St_{b^2T}(\nu_{\sigma}))$ has more than $n$ $k$-cells for each $0<k<d-m$. 
\end{corollary}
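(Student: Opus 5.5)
We argue by contradiction, applying Theorem \ref{maptheorem} to the PL map $p$ and to the classifying map $\phi=\mathrm{id}:M\rightarrow M=BG$, where $G=\pi_1M$. Since $M$ is closed hyperbolic, it is aspherical, so it is a model for $BG$. By the freedom theorem of \cite{avramididelzanthyperbolic}, the injectivity radius assumption makes $\mathbb K[G]$ an $n$-fir for any field $\mathbb K$; we take $\mathbb K=\mathbb Z/2$ to avoid orientation issues. Now suppose that for some $0<k<d-m$ every fiber $p^{-1}(z)$ admits a cell structure with at most $n$ $k$-cells.

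The first step is to reduce to a compact parameter space. The image $p(M)$ is compact, so we choose a finite simplicial complex $Z\subset\mathbb R^m$ with $\dim Z\le m$ containing $p(M)$. After subdividing, $p:M\rightarrow Z$ is a PL map of simplicial complexes. Fibers over points outside $p(M)$ are empty, so the hypothesis holds for every $z\in Z$. Theorem \ref{maptheorem} then gives a chain map $f:C_*(M;\mathbb K[G])\rightarrow C_*(M;\mathbb K[G])$ that is chain homotopic to the identity and vanishes in degrees above $k+m<d$.

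The second step extracts a contradiction in top degree. We pass to coefficients by tensoring with $\mathbb K$ over $\mathbb K[G]$, i.e.\ push down from the universal cover $\widetilde M$ to $M$. The chain homotopy survives, so the identity on $C_*(M;\mathbb K)$ is chain homotopic to a map that is zero in degree $d$. This forces the identity on $H_d(M;\mathbb Z/2)\cong\mathbb Z/2$ to be zero, which is absurd. Hence some fiber needs more than $n$ $k$-cells.

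The main obstacle is bookkeeping about chain complexes, so the key point is to make sure the chain map and homotopy produced by Theorem \ref{maptheorem} are $\mathbb K[G]$-equivariant. Here $C_*(X;\mathbb K[G])$ means the cellular chains of the pullback of the universal cover, and equivariance is exactly what lets us tensor down to $C_*(M;\mathbb K)$ while keeping the chain homotopy. Alternatively, we can avoid descending altogether: we take cohomology with compact support, or use the fundamental class of $H_d$ with locally finite chains, to see the same contradiction directly upstairs.
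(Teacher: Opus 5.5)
Your proposal is correct and follows the paper's proof. Assuming the conclusion fails, you apply Theorem \ref{maptheorem} with $X=BG=M$, $\phi=\mathrm{id}$ and $\mathbb K=\mathbb F_2$, which makes the identity chain homotopic to a chain map vanishing in degree $d>k+m$, contradicting the fundamental class in $H_d(M;\mathbb F_2)$; your extra details (passing to a finite simplicial complex $Z\subset\mathbb R^m$ containing $p(M)$, and tensoring the $\mathbb K[G]$-equivariant chain homotopy down to $C_*(M;\mathbb K)$) are exactly the steps the paper leaves implicit.
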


\begin{proof}
The injectivity radius assumption implies that $n$-generated ideals in $\mathbb F_2[G]$ are free (Theorem 4 of \cite{avramididelzanthyperbolic}). Therefore, we can apply Theorem \ref{maptheorem} with $X=BG=M$ to deduce that if the conclusion of Corollary \ref{hypmanifoldcor} is not true, (i.e. if there is a $0<k<d-m$ such that for any $z\in\mathbb R^m$ there is a cell structure on $p^{-1}(z)$ with at most $n$ $k$-cells), then the identity map $\phi=id:M\rightarrow M$ is chain homotopic to a chain map that is zero above dimension $k+m$. In particular, it is zero in dimension $d$. Hence, the identity map induces the zero map on $H_d(M;\mathbb F_2)$, which is a contradiction since $M$ has a fundamental class.% in dimension $d$.
\end{proof}
\begin{remark}
 The possibility of obtaining a cellular waist inequality for higher dimensional cells via the methods of \cite{avramididelzanthyperbolic} was suggested by Gromov in a talk he gave in Cambridge in 2025\footnote{M. Gromov, Combinatorial and Homological Waists of Infinite Groups, https://www.newton.ac.uk/seminar/47815}. 
\end{remark}

\begin{remark}
The same result holds if $M^d$ is a $\mathbb K$-essential manifold (i.e. the image of $[M]$ in $H_*(B\pi_1M;\mathbb K)$ is not zero) if we assume that $\mathbb K[\pi_1M]$ is an $n$-fir. This is true if $\pi_1M$ acts on a $\delta$-hyperbolic space with displacement larger than $100\delta\log_2((n+1)!)$.
\end{remark}
\begin{remark}
Stable (generic) $C^{\infty}$ maps between manifolds are triangulable (\cite{verona}), so they satisfy the hypothesis of Theorem \ref{maptheorem}. 
\end{remark}
%The $k=1$ case of these types of theorems was proved by Gromov in Section 2.4 of \cite{gromovexpanders}. The current paper can be viewed as a group ring version of his approach. 

%Corollary \ref{hypmanifoldcor} addresses a query of Guth (page 5 of \cite{guthwaist}) asking whether a map $M^7\rightarrow\mathbb R^2$ from a closed, arithmetic hyperbolic $7$-manifold to $\mathbb R^2$ must have a topologically complicated fiber. 
%Gromov's result shows there is a fiber with high $1$-dimensional complexity, while our result provides (for each $0<k<d-m$) a fiber with high $k$-dimensional complexity.  

There are other recent papers on waist inequalities, e.g. for some locally symmetric spaces \cite{fraczyklowewaist} and Kazhdan groups \cite{badersauerwaist}. In that spirit, we would like to suggest a group ring theoretic conjecture that would imply (via the methods of this paper) $k$-cellular waist inequalities for maps $M^d\rightarrow\mathbb R^m$ from uniform locally symmetric spaces of real rank $r$ whenever $0<k<d-m-(r-1)$. 
\begin{conjecture}
\label{higherrankconjecture}
Suppose $\widetilde M^d$ is a symmetric space with curvature $-1\leq K\leq 0$ and real rank $r$. Then there is a function $f(n)$ such that if a group $G$ acts on $\widetilde M$ with minimum displacement greater than $f(n)$ ($d(gx,x)> f(n)$ for any $x\in\widetilde M$ and $1\not=g\in G$), then any $n$-generated ideal in $\mathbb K[G]$ has a free resolution of length $r-1$.%consisting of $r$ terms.
\end{conjecture}
Applying the conjecture to the augmentation ideal of $G$, we get a group theoretic special case of the conjecture: $n$-generated groups that act on $\widetilde M$ with minimum displacement $>f(n)$ should have cohomological dimension $\leq r$. In the real rank one (i.e. pinched negatively curved) case, the conjecture is true by the $\delta$-hyperbolic freedom theorem in \cite{avramididelzanthyperbolic}. But, even the group theoretic special case of it appears to be open for the rank two example $\widetilde M=\mathbb H^2\times\mathbb H^2$.

\subsection*{Strategy of the proof of Theorem \ref{maptheorem}}
The construction of the chain map in the statement of the theorem is organized into four steps, along the following lines. 

We have a simplicial map $p:X\rightarrow Z$. Let $Z_i$ be the $i$-skeleton of $Z$ and set $X_i=p^{-1}(Z_i)$. For each simplex $\sigma \subset Z$ choose a point $z\in\sigma^{\circ}$ and put $X_{\sigma}=p^{-1}(z)$. By hypothesis, each $X_{\sigma}$ admits a cell structure with at most $n$ $k$-cells.% of dimension $k$. 

\subsubsection*{First step (section \ref{sqf})} The first step is topological. Using the given cell structures on the fibres, we replace $X$ by a cell complex $X'$ obtained by gluing the cell complexes $\sigma\times X_{\sigma}$. The following properties of $X'$ will be used. 

The cell complex $X'$ is filtered by subcomplexes $X_0=X_0'\subset X_1'\subset\dots$ such that 
\begin{enumerate}
\item 
$X_0'=X_0=\coprod_{v\in Z_0}v\times X_v$,
\item
$X'_i\setminus X'_{i-1}=\coprod_{|\sigma|=i}\sigma^{\circ}\times X_{\sigma}$,
\item 
$X'$ is homotopy equivalent to $X$ relative to $X_0.$
\end{enumerate}
The significance of this complex is that its cellular structure (and the associated chain complex) can be described simply in terms of the cellular structures of the fibres $X_{\sigma}$ of the map $p$. 
\subsubsection*{Second step (section \ref{chaintruncation})} For each vertex $v$ of $Z$, we construct a chain map $f_v:C_*(X_v;\mathbb K[G])\rightarrow C_*(BG;\mathbb K[G])$ which coincides with $\phi$ below degree $k$ and vanishes above degree $k$. This is possible because, for every vertex $v$, the fiber of $v$ has no more than $n$ cells in dimension $k$, and $\mathbb K[G]$ is an $n$-fir. 
\subsubsection*{Third step (section \ref{chainmapF})} We %explain how to 
extend these local maps on vertices to a global chain map 
$$
F:C_*(X';\mathbb K[G])\rightarrow C_*(BG;\mathbb K[G])
$$
that is zero in all dimensions $>k+\dim(Z)$, and coincides with $f_v$ on each $C_*(X_v)$.% for every vertex $v$ of $Z$.
\subsubsection*{Last step (section \ref{proofoftheorem})} We use the homotopy equivalence $L$ (relative to $X_0$) from $X$ to $X'$ and the asphericity of $BG$ to prove that the chain map induced by $\phi$ is chain homotopic to the map $FL$, which is zero on $C_{>k+\dim(Z)}(X)$.

\subsection*{Examples} We end the paper with a series of examples illustrating the difference between simplicial and cellular complexity (\ref{simpvscell}), and showing that the lower bound on injectivity radius of $M$ in Corollary \ref{hypmanifoldcor} cannot be replaced with a lower bound on volume (\ref{largevol}) or the number of cells of $M$ (\ref{largerank} and \ref{complex}).

\section{Cellular singular fibration\label{sqf}}
In this paper, we are primarily interested in understanding the topological complexity of fibres of PL maps. The features of PL maps that will be relevant to us are captured in what we call a {\it cellular singular fibration}. We will first give the definition and then prove that PL maps between simplicial complexes satisfy it. 
\subsection{A definition}
%\begin{defn}[Cellular singular fibration]
Let $X$ be a topological space, $Z$ a cell complex and $p:X\rightarrow Z$ a continuous map. Let $Z_i$ be the $i$-skeleton of $Z$ and set $X_i:=p^{-1}(Z_i)$. We say that $p$ is a {\it cellular singular fibration} if the following conditions hold. 
\begin{enumerate}
\item 
\label{baryfib}
For every point $z\in Z$, the fiber $p^{-1}(z)$ has a cell structure.
\item 
\label{trivfiber}
For every cell $\sigma$ in $Z$, $p$ is a trivial fibration over the open cell $\sigma^{\circ}$. Denote the fiber of this fibration $X_{\sigma}$, so that $p^{-1}(\sigma^\circ)$ is homeomorphic to $\sigma^\circ\times X_{\sigma}$.  
\item 
\label{mappingcylinderneighborhood}
For each $i$, we have a map%$X_{i-1}$ in $X_i$ has a mapping cylinder neighborhood $M(\varphi_i)$, where 
$$
\varphi_i:\coprod_{|\sigma|=i}S^{i-1}\times X_{\sigma}\rightarrow X_{i-1}
$$
%\textcolor{blue}{
%What we actually use in Lemma \ref{homotopyeq} (and prove for PL maps in Proposition \ref{PLmap}) is that 
and a decomposition 
$$
X_i=X_{i-1}\cup_{\varphi_i}\left(\coprod_{|\sigma|=i}D^i\times X_{\sigma}\right).
$$ 
\item 
\label{regnbhd}
For each $i$, $X_0$ has a mapping cylinder neighborhood in $X_i$.
\end{enumerate}
%Then we will call $p$ a cellular singular fibration.
\begin{remark}
Part (\ref{mappingcylinderneighborhood}) of the definition implies that $X_{i-1}$ in $X_i$ has a mapping cylinder neighborhood (homeomorphic to the mapping cylinder $M(\varphi_i)$ of $\varphi_i$.)
\end{remark}
\begin{remark}
This is a variation of the definition of a (stratified singular) quasi-fibration on p.762 of \cite{gromovexpanders}. Here is the precise definition of Gromov.

Let $p:X\rightarrow Z$ be a continuous map of topological spaces. It is a stratified singular quasi-fibration if the following holds. There is a filtration $Z_0\subset Z_i\subset\dots$ of $Z$ by closed cellular subsets (e.g. $Z_i$ is the $i$-skeleton of some cell structure on $Z$) such that 
\begin{itemize}
\item 
the map $p$ is a fibration with connected fibres over all open strata $Z_i\setminus Z_{i-1}$ (the fibres may be different over different connected components of $Z_i\setminus Z_{i-1}$),
\item 
there are homotopies of the identity maps $h_i(\cdot,t):Z_i\times[0,1]\rightarrow Z_i$ such that $h_i(\cdot,1)$ sends some neighborhood of $Z_{i-1}\subset Z_i$ to $Z_{i-1}$ and such that these homotopies lift to homotopies of the identity maps of $p^{-1}(Z_i)$.  
\end{itemize}
`Cellular singular fibration' is a more restrictive notion requiring, in particular, that $X_{i-1}$ has a mapping cylinder neighborhood in $X_i$. The extra structure will be used to describe, given a cell structure on each $X_{\sigma}$, a cell structure on a space homotopy equivalent to $X$.
\end{remark}
\subsection{PL maps}
The main example is given by the next proposition. 
\begin{proposition}
\label{PLmap}
Let $X$ and $Z$ be two simplicial complexes. Any PL map $X\rightarrow Z$ is a cellular singular fibration. 
\end{proposition}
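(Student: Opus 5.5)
First subdivide $X$ and $Z$ so that $p$ becomes simplicial; a PL map becomes simplicial after suitable subdivisions, and refining the cell structure of $Z$ changes nothing in the definition. We then take the cell structure on $Z$ to be the simplices of a further subdivision, namely the barycentric subdivision $b Z$, with $p$ simplicial onto $Z$. The point of passing to $bZ$ is that the open simplices of $bZ$ lie inside open simplices of $Z$ and carry a canonical product-like structure. Over the interior of a simplex $\tau$ of $Z$, the preimage $p^{-1}(\tau^\circ)$ is a union of open simplices of $X$ mapping onto $\tau$. The standard fact is that for a simplicial map each simplex $\Delta$ of $X$ mapping onto $\tau$ is a join $\Delta=\ast_{v\in\tau}\Delta_v$ of the faces over the vertices of $\tau$. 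Hence $p^{-1}(y)\cap\Delta\cong\prod_v\Delta_v$ for $y\in\tau^\circ$, and $p^{-1}(\tau^\circ)\cong\tau^\circ\times F_\tau$, where $F_\tau=\bigcup_\Delta\prod_v\Delta_v$ is a cell complex with cells that are products of simplices. This gives conditions (\ref{baryfib}) and (\ref{trivfiber}) for cells of $Z$, and the same argument applies to the open simplices of $bZ$. On such a simplex, the fiber is the fiber over the corresponding open simplex of $Z$ that contains it.

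For (\ref{mappingcylinderneighborhood}), fix an $i$-simplex $\sigma$ of the cell structure and write $\sigma$ as a join parameter: points of $\sigma$ have barycentric coordinates $(t_v)$. On the closed piece $p^{-1}(\sigma)$, each simplex $\Delta$ over $\sigma$, or over a face of $\sigma$, is a join of its pieces $\Delta_v$, so $p^{-1}(\sigma)$ is the ``join over $\sigma$'' of the fibers $\Delta_v$. Using the join coordinates we write $p^{-1}(\sigma)$ as $D^i\times X_\sigma$ modulo collapses on $\partial D^i$. Here $D^i\cong\sigma$, $X_\sigma=\bigcup\prod_v\Delta_v$ is the fiber over the barycenter, and a point $(t,(x_v))$ maps to $\sum t_vx_v$. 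Over a boundary point with $t_v=0$, the coordinate $x_v$ is forgotten. This gives the attaching map $\varphi_i:S^{i-1}\times X_\sigma\to p^{-1}(\partial\sigma)\subset X_{i-1}$ and the pushout description $X_i=X_{i-1}\cup_{\varphi_i}\coprod D^i\times X_\sigma$. A point to be careful about is that the map $D^i\times X_\sigma\to p^{-1}(\sigma)$ must be a homeomorphism on the interior. It must also be a quotient map overall, which holds by compactness of each $\Delta$ together with the weak topology.

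For (\ref{regnbhd}), the plan is to use the barycentric subdivision of $Z$ in an essential way, so that the vertices of $bZ$ that lie in $Z_0$ have disjoint closed stars. Let $N\subset Z_i$ be the union, over original vertices $v$, of the subsets of the closed stars of $v$ in $bZ$ where $t_v\ge 1/2$ (or a similar collar defined by the barycentric coordinate of $v$). Then $p^{-1}(N)$ is a neighborhood of $X_0$ in $X_i$. The join structure above identifies it with the mapping cylinder of the map $p^{-1}(\{t_v=1/2\})\to p^{-1}(v)$ that sends $\sum t_ux_u\mapsto x_v$: on each simplex $\Delta=\Delta_v\ast\Delta'$ this is the standard mapping-cylinder structure of a join near one of its factors. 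This is the regular-neighborhood statement for the full subcomplex $X_0$, which is full because it is a union of preimages of vertices.

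The main obstacle I expect is making the identifications in (\ref{mappingcylinderneighborhood}) and (\ref{regnbhd}) globally consistent rather than simplex-by-simplex. Each is defined by the canonical join coordinates on simplices of $X$, so compatibility on common faces should follow from naturality of barycentric coordinates under face inclusions. The remaining check is that the resulting maps glue to homeomorphisms, which I would do by verifying continuity and bijectivity on each closed simplex and invoking compactness.
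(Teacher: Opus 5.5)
Your join-coordinate computations are correct, but the passage to $bZ$ is a real mistake as written and should be dropped. Refining the cell structure of $Z$ does change the definition, because it changes the skeleta $Z_i$ and hence $X_0$.

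\begin{itemize}
\item With the cells of $bZ$, $Z_0$ consists of all barycenters $b_\tau$, so $X_0=\coprod_\tau p^{-1}(b_\tau)$. Your $p^{-1}(N)$ is built only around the original vertices, so it is not a neighborhood of this $X_0$. Nothing in the proposal gives a mapping cylinder structure near $p^{-1}(b_\tau)$ for $|\tau|>0$.
\item Since $p$ is simplicial only onto $Z$, the simplices of $X$ are joins over the vertices of simplices of $Z$, not of $bZ$. So your treatment of (\ref{mappingcylinderneighborhood}) is really about simplices of $Z$.
\item The closed stars in $bZ$ of adjacent original vertices $v,w$ are not disjoint: they meet at the barycenter of $[v,w]$. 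The sets $\{t_v\ge 1/2\}$ and $\{t_w\ge 1/2\}$ also share that point.
\end{itemize}

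The repair is to use the simplices of $Z$ itself as the cells, and $N_v=p^{-1}(\{t_v\ge c\})\cap X_i$ with $1/2<c<1$. Then your arguments go through essentially unchanged:
\begin{itemize}
\item Take $D^i=\sigma$ and the attaching map $\varphi_i(t,(x_v))=\sum_{t_v>0}t_vx_v$ on $\partial\sigma\times X_\sigma$. This gives $X_i=X_{i-1}\cup_{\varphi_i}\coprod_\sigma\sigma\times X_\sigma$.
\item State the quotient property for $X_{i-1}\sqcup\coprod_\sigma\sigma\times X_\sigma\to X_i$, not for $\sigma\times X_\sigma\to p^{-1}(\sigma)$. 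The latter need not be onto, since $p^{-1}(\partial\sigma)$ may contain simplices that are not faces of simplices over $\sigma$.
\item On each $\Delta=\Delta_v\ast\Delta'$, the set $\{t_v\ge c\}$ is the mapping cylinder of the projection $\Delta_v\times\Delta'\to\Delta_v$. These structures agree on common faces, and the $N_v$ are disjoint for distinct $v$.
\end{itemize}

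With that correction, your proof takes a genuinely different route from the paper's. The paper argues as follows:
\begin{itemize}
\item It passes to barycentric subdivisions, so that fibers over barycenters are subcomplexes of $bX$; this gives (\ref{baryfib}).
\item It gets (\ref{trivfiber}) from $p$ being an affine submersion on open simplices.
\item For (\ref{mappingcylinderneighborhood}), it asserts that the regular neighborhood of $bX_{i-1}$ in $bX_i$ is a mapping cylinder covering that of $bZ_{i-1}$ in $bZ_i$. It then glues on $p^{-1}(D^i_\sigma)$ for a small disk $D^i_\sigma\subset\sigma^\circ$, where triviality over $\sigma^\circ$ suffices.
\item It gets (\ref{regnbhd}) from regular neighborhood theory for the subcomplex $X_0$.
\end{itemize}

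Your version replaces this with explicit, face-compatible join coordinates. It writes down the homeomorphisms and the attaching map over the whole closed simplex. It gives the fibers a cell structure by products of simplices rather than a triangulation. It also avoids needing a derived subdivision of $X$ chosen compatibly with $bZ$: with standard barycenters, $p\colon bX\to bZ$ need not be simplicial when $p$ collapses a simplex unevenly. In effect, your join coordinates are what justify the paper's ``covering'' claim about regular neighborhoods. The paper's route, in exchange, stays within standard PL machinery and yields simplicial fibers.
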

Since stable $C^{\infty}$ maps are triangulable (\cite{verona}), this gives the following.% corollary. 
\begin{corollary}
A stable $C^{\infty}$ map $X\rightarrow Z$ between manifolds is a cellular singular fibration provided $X$ is compact. 
\end{corollary}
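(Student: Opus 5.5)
The plan is to reduce the corollary to Proposition \ref{PLmap} by triangulating the smooth map. That proposition applies only to PL maps between simplicial complexes. So the whole task is to show that a stable $C^{\infty}$ map $p:X\rightarrow Z$ with $X$ compact is, up to homeomorphism of source and target, a simplicial (hence PL) map. We must also check that being a cellular singular fibration is invariant under such homeomorphisms.

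\textbf{Step 1: triangulate the map.} Invoke the triangulation theorem for stable maps (Verona \cite{verona}; see also Shiota). It says that if $p:X\rightarrow Z$ is a proper $C^{\infty}$-stable map, then there are triangulations $K$ of $X$ and $L$ of $Z$ with homeomorphisms $h:|K|\rightarrow X$ and $g:|L|\rightarrow Z$ such that $g^{-1}\circ p\circ h:|K|\rightarrow|L|$ is simplicial.

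Compactness of $X$ is used exactly here, in two ways. It makes $p$ proper, since the preimage of a compact set is closed in a compact space. It also makes $p(X)$ compact, so only finitely many simplices of $L$ meet the image. The triangulation can then be taken finite on the relevant part, which is what the finite-complex setting of Proposition \ref{PLmap} requires. Stability is the hypothesis that makes the stratification of $p$ by singularity type locally trivial, and that local triviality is what Verona's theorem needs. We do not reprove this; we only check that our hypotheses match his, namely smoothness, properness and stability.

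\textbf{Step 2: apply Proposition \ref{PLmap}.} The map $q=g^{-1}\circ p\circ h$ is a PL map of simplicial complexes. By Proposition \ref{PLmap}, $q$ is a cellular singular fibration with respect to the cell structure $L$ on the base.

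\textbf{Step 3: transport the structure back to $p$.} Give $Z$ the cell structure $g(L)$, so that $g$ carries skeleta to skeleta, and set $X_i=p^{-1}(Z_i)=h(q^{-1}(L_i))$. Each condition in the definition is preserved under composing with homeomorphisms:
\begin{enumerate}
\item cell structures on fibres map to cell structures on the fibres $p^{-1}(g(y))=h(q^{-1}(y))$;
\item triviality over open cells $g(\sigma^\circ)$ is preserved;
\item the maps $\varphi_i$ and the pushout decompositions of $X_i$ transport via $h$;
\item mapping cylinder neighborhoods are preserved by $h$.
\end{enumerate}
Hence $p$ is a cellular singular fibration.

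The main obstacle is Step 1: matching the exact hypotheses of the stable-map triangulation theorem, in particular properness and the form of stability. Everything after that is formal. If $Z$ is noncompact, for instance $\mathbb R^m$, I would first restrict to a compact polyhedral neighborhood of $p(X)$, or use a triangulation of all of $Z$ in which only finitely many simplices meet $p(X)$. Over the remaining cells the fibres are empty, which is harmless.
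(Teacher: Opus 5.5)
Your proposal is correct and follows the paper's route: the paper deduces the corollary in one line by citing Verona's triangulation theorem for stable maps and then applying Proposition \ref{PLmap}, and your Steps 2--3 spell out the routine transport of the structure along the homeomorphisms, which the paper leaves implicit. One small correction: Proposition \ref{PLmap} is stated for arbitrary simplicial complexes, so compactness of $X$ is needed only to make $p$ proper for Verona's theorem, not to meet any finiteness requirement.
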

\begin{proof}[Proof of Proposition \ref{PLmap}]

Let $p:X\rightarrow Z$ be a piecewise linear map of simplicial complexes. By definition of a PL map, after subdividing $X$ and $Z$ we may assume that $p$ is simplicial. Let $Z_i$ be the $i$-skeleton of $Z$ and $X_i=p^{-1}(Z_i)$. Observe several features of the simplicial setting. Below, the letter $b$ applied to a simplicial complex denotes its barycentric subdivision.

First, for each simplex $\sigma \in Z$, the inverse image of the interior $p^{-1}(\sigma^{\circ})$ is a trivial fibration over $\sigma^\circ$ since for each simplex $\tau$ in $X$ which has $\tau^{\circ}\subset p^{-1}(\sigma^{\circ})$, the restriction $p\mid_{\tau^{\circ}}$ has constant rank $|\sigma|$ (is an affine submersion), so (\ref{trivfiber}) is satisfied. 

Second, $p:bX\rightarrow bZ$ is simplicial, and the fiber over a point in $\sigma^{\circ}$ can be identified with the inverse image of the barycenter $p^{-1}(b_{\sigma})$, which is a subcomplex of $bX$. So, (\ref{baryfib}) is satisfied.

%Second, the fiber can be identified with the inverse image $p^{-1}(b_{\sigma})$ of the barycenter $b_{\sigma}$ of $\sigma$, and hence it is a subcomplex of the barycentric subdivision $bX$ of $X$. More precisely, $p:bX\rightarrow bZ$ is simplicial, and $p^{-1}(b_{\sigma})$ is a subcomplex of $bX$, so %More precisely (see Lemma 2.16 in \cite{rourkesanderson}) there exists a subdivision $X'$ of $X$ 

Third, the regular neighborhood of $bX_{i-1}$ in $bX_i$ deformation retracts onto $bX_{i-1}$, and this deformation retraction covers the deformation retraction of the regular neighborhood of $bZ_{i-1}$ in $bZ_i$ onto $bZ_{i-1}$. The regular neighborhood of $bZ_{i-1}$ in $bZ_i$ is the mapping cylinder of a retraction $r_i:\coprod_{|\sigma|=i}S_{\sigma}^{i-1}\rightarrow bZ_{i-1}$ of a disjoint union of $(i-1)$-spheres (for each $i$-simplex $\sigma$ in $Z$, we pick a closed $i$-disk $D^i_{\sigma}$ embedded in the interior $\sigma^{\circ}$ of $\sigma$, and let $S^{i-1}_{\sigma}:=\partial D^i_{\sigma}$), and $bZ_i$ can be described as union 
$$
bZ_i=bZ_{i-1}\cup_{r_i}\left(\coprod_{|\sigma|=i}D_{\sigma}^i\right).
$$
The retraction $r_i$ is covered by a retraction $\varphi_i:\coprod_{|\sigma|=i}p^{-1}(S_{\sigma}^{i-1})\rightarrow bX_{i-1}$, the regular neighborhood of $bX_{i-1}$ in $bX_i$ is the mapping cylinder $M(\varphi_i)$ of this retraction, and $bX_i$ can be described as a union 
$$
bX_i=bX_{i-1}\cup_{\varphi_i}\left(\coprod_{|\sigma|=i}p^{-1}(D_{\sigma}^i)\right).
$$
Since $p$ is a trivial fibration over each $\sigma^{\circ}$, it is trivial over $D_{\sigma}^i\subset\sigma^{\circ}$, so (\ref{mappingcylinderneighborhood}) is satisfied. 

Finally, fourth, $X_0$ is a subcomplex of $X_i$ so it has a regular neighborhood in $X_i$, which implies (\ref{regnbhd}).
\end{proof}

%For the proof of Theorem \ref{maptheorem}, we will need to put arbitrary cell structures on fibres of $p$. These may not be compatible inside of $X$, so we will need to homotope attaching maps to be cellular and build a new, homotopy model for $X$ in which they are compatible. An appropriate setting for this (somewhat more general than the situation of a PL map) is the following. 

%\begin{remark}
%Does the fourth bullet follow from the third one?
%\end{remark}

%We will say that $p$ is a {\it cellular} stratified quasi-fibration if every fiber $X_{\sigma}$ can be given the structure of a cell complex.
%\subsection*{Examples}
%We observed above that a PL map of simplicial complexes (which we may take to be simplicial after appropriate subdivisions) satisfies the definition. Real algebraic or stable $C^{\infty}$ maps are triangulable (\cite{hardt,verona}), so they also satisfy it. %these conditions. %A generic smooth or generic real analytic map does, as well.  a generic smooth map\footnote{Every smooth manifold can be triangulated. Is it not true that every smooth map of smooth manifolds is PL for some triangulations on the domain and range?}, then it is a stratified quasi-fibration and every fiber can be given the structure of a cell complex.

%\begin{remark}
%The PL example has the additional property (which we want) that a regular neighborhood of $X_{i-1}$ in $X_i$ is homotopy equivalent to $X_{i-1}$. Does this follow from the definition of stratified quasi-fibration above, or does it need to be an additional assumption?
%\end{remark}

\subsection{A cell complex model for a cellular singular fibration}
Let $p:X\rightarrow Z$ be a cellular singular fibration. For each cell $\sigma$ in $Z$ we fix a cellular structure on the fiber $X_{\sigma}=p^{-1}(b_{\sigma})$. In this subsection we construct an auxiliary space $X'$ obtained by gluing the spaces $\sigma\times X_{\sigma}$. The space $X'$ is homotopy equivalent to $X$  
%Next, we show that the above four conditions are enough to replace $X$ by a (filtered) cell complex $X'$ that has $X'_0=X_0=\coprod_v v\times X_v$ (with specified cell structures) and is homotopy equivalent to $X$ 
relative (see the definition below)
%\footnote{Suppose that $X$ and $X'$ both contain $X_0$. Then $L:X\rightarrow X'$ is a homotopy equivalence relative to $X_0$ if $L$ fixes $X_0$ and there is $R:X'\rightarrow X$ that fixes $X_0$ such that $RL$ is homotopic to $id_X$ and $LR$ is homotopic to $id_{X'}$, both via homotopies that fix $X_0$.} 
to $X_0$ and has a cell structure built out of the cell structures on the $X_{\sigma}$. 

\begin{defn}
Let $X$ and $X'$ be topological spaces, both containing a subspace $X_0$. A map $L:X\rightarrow X'$ is a homotopy equivalence relative to $X_0$ if $L$ fixes $X_0$ and there is $R:X'\rightarrow X$ that fixes $X_0$ such that $RL$ is homotopic to $id_X$ and $LR$ is homotopic to $id_{X'}$, both via homotopies that fix $X_0$.
\end{defn}

\begin{proposition}
\label{cellularprop}
Suppose that $p:X\rightarrow Z$ is a cellular singular fibration. %\footnote{In particular, this applies if $p$ is a simplicial map of simplicial complexes.}. 
For each cell $\sigma$ in $Z$, pick a cell structure on $X_\sigma$. Then, there is a cell complex $X'$ with a filtration by subcomplexes $X'_0\subset X'_1\subset\dots$ such that 
\begin{itemize}
%\item 
%for each $z\in Z$, $p^{-1}(z)$ is homeomorphic to $p'^{-1}(z)$,
%\item 
%each $X'_i$ is a subcomplex, 
\item 
$X'_0=X_0=\coprod_v v\times X_v$, %equipped with the specified cell structures,
\item 
$X'_i\setminus X'_{i-1}=\coprod_{|\sigma|=i}\sigma^{\circ}\times X_{\sigma}$, and
\item 
$X'_i$ is homotopy equivalent to $X_i$ relative to $X_0$. 
\end{itemize}
%homotopy equivalent to $X$ and a stratified quasi-fibration $p':X'\rightarrow Z$ with the same fibres as $p$.\footnote{We are not claiming that there is a fiberwise homotopy equivalence.}
\end{proposition}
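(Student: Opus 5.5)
We build $X'_i$ by induction on $i$, together with homotopy equivalences $L_i:X_i\to X'_i$ and $R_i:X'_i\to X_i$ relative to $X_0$. The base case is $X'_0=X_0=\coprod_v v\times X_v$ with the chosen cell structures, and $L_0=R_0=\mathrm{id}$.

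For the inductive step, suppose $X'_{i-1}$, $L_{i-1}$ and $R_{i-1}$ are constructed. Condition (\ref{mappingcylinderneighborhood}) of the definition writes $X_i$ as $X_{i-1}\cup_{\varphi_i}\bigl(\coprod_{|\sigma|=i}D^i\times X_\sigma\bigr)$. Consider the composite
$$\psi=L_{i-1}\circ\varphi_i:\coprod_{|\sigma|=i}S^{i-1}\times X_\sigma\to X'_{i-1}.$$
Give $S^{i-1}\times X_\sigma$ the product cell structure (sphere as one $0$-cell plus one $(i-1)$-cell), and $D^i\times X_\sigma$ the product structure with the extra cells $e^i\times e$. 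By cellular approximation, $\psi$ is homotopic to a cellular map $\psi'$. We make the approximation rel the skeleton-free part inductively over cells of $X_\sigma$; any cellular approximation suffices.

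Define
$$X'_i=X'_{i-1}\cup_{\psi'}\Bigl(\coprod_{|\sigma|=i}D^i\times X_\sigma\Bigr).$$
Since $\psi'$ is cellular, this is a CW complex containing $X'_{i-1}$ as a subcomplex. Its new cells are exactly $e^i\times e$ for cells $e$ of $X_\sigma$, and as a set $X'_i\setminus X'_{i-1}=\coprod\sigma^\circ\times X_\sigma$, identifying $\sigma^\circ$ with the open disk. This gives the first two bullets.

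For the third bullet we use the standard gluing lemma for adjunction spaces. If $A\hookrightarrow B$ is a cofibration (here $S^{i-1}\times X_\sigma\hookrightarrow D^i\times X_\sigma$, a relative CW inclusion), $f\simeq f'$, and $h:Y\to Y'$ is a homotopy equivalence, then
$$Y\cup_f B\simeq Y\cup_{f'}B\simeq Y'\cup_{hf'}B.$$
Concretely, homotopic attaching maps give homotopy equivalent adjunctions (rel $Y$), and a homotopy equivalence of the base extends over the adjunction (Brown's gluing theorem).

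Applying this with $Y=X_{i-1}$, $Y'=X'_{i-1}$, $h=L_{i-1}$ and $f=\varphi_i$ yields
$$X_i\simeq X_{i-1}'\cup_{\psi'}(\dots)=X'_i.$$
Both steps are relative to $X_{i-1}$, respectively relative to the map $L_{i-1}$ on $X_{i-1}$. Since $L_{i-1}$ and $R_{i-1}$ are rel $X_0$ and the homotopies fix $X_0$, the resulting equivalences $L_i$ and $R_i$ restrict to the identity on $X_0$, and so do the homotopies. The reason is that the gluing constructions extend given maps and homotopies on the base and only modify things over the attached pieces and the collar.

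The main obstacle is keeping track of relativity to $X_0$, because the gluing theorem is usually stated as producing a homotopy equivalence of pairs, not rel a subspace. I would handle this as follows. The homotopies in the gluing lemma restrict on $Y$ to the given homotopies between $R_{i-1}L_{i-1}$ (respectively $L_{i-1}R_{i-1}$) and the identity, so they fix $X_0$ by induction. The homotopy $\psi\simeq\psi'$ only affects the attached part, which does not meet $X_0$ for $i\ge1$. Condition (\ref{regnbhd}), that $X_0$ has a mapping cylinder neighborhood, supplies the cofibration needed to make these relative statements valid.

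If $Z$ is finite-dimensional, which is the case of interest, we set $X'=\bigcup_i X'_i$ and we are done. Otherwise the colimit of compatible rel-$X_0$ equivalences along cofibrations is again an equivalence.
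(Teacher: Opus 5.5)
Your construction of $X'_i$ is the paper's: glue $\coprod_{|\sigma|=i}D^i\times X_\sigma$ to $X'_{i-1}$ along a cellular approximation of $L_{i-1}\circ\varphi_i$. So the first two bullets are handled identically.

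For the homotopy equivalence you cite the gluing theorem for adjunction spaces. The paper instead proves the needed instance by hand, via the mapping cylinder of $L_{i-1}$:
$X_{i-1}\cup_{\varphi_i}\Sigma\hookrightarrow M(L_{i-1})\cup_{\varphi_i}\Sigma\simeq M(L_{i-1})\cup_{\varphi'}\Sigma\to X'_{i-1}\cup_{\varphi'}\Sigma$.
This uses two facts: $M(L_{i-1})$ deformation retracts onto both ends, and homotopic attaching maps give adjunctions that are equivalent rel the base. Either route is fine.

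You should fix the order in your chain, though. $\psi'$ is homotopic to $L_{i-1}\varphi_i$ inside $X'_{i-1}$. So you must first push forward along $L_{i-1}$, and only then change the attaching map rel $X'_{i-1}$. As displayed, your formula would require $\psi'=L_{i-1}f'$ with $f'\simeq\varphi_i$ in $X_{i-1}$, which is not how $\psi'$ was chosen.

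The relative-to-$X_0$ step is where your write-up is not yet a proof. You claim the gluing homotopies restrict on the bases to the \emph{given} homotopies $H:R_{i-1}L_{i-1}\simeq\mathrm{id}$ and $H':L_{i-1}R_{i-1}\simeq\mathrm{id}$. This is not automatic.
\begin{itemize}
\item For one composite you can arrange it, by building the inverse on the attached part out of a collar and $H\circ\varphi_i$.
\item For the other composite it requires $L_{i-1}\circ H$ and $H'\circ L_{i-1}$ to be homotopic as homotopies from $L_{i-1}R_{i-1}L_{i-1}$ to $L_{i-1}$. In general one of them must first be modified. This can be done keeping $X_0$ fixed, but it has to be said.
\end{itemize}

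The paper sidesteps this bookkeeping entirely.
\begin{itemize}
\item It carries along only that $L_{i-1}$ is a homotopy equivalence equal to the identity on $X_0$, and checks that the composite $L_i$ is again the identity on $X_0$.
\item $(X_i,X_0)$ has the homotopy extension property by the mapping cylinder neighborhood condition (\ref{regnbhd}).
\item $(X'_i,X_0)$ has it because $X_0$ is a subcomplex.
\item Proposition 0.19 of \cite{hatcherbook} then upgrades $L_i$ to a homotopy equivalence rel $X_0$.
\end{itemize}
That is exactly the role of condition (\ref{regnbhd}) your last sentence gestures at. Inserting this argument completes your proof, and it makes tracking $R_{i-1}$ and its homotopies through the induction unnecessary.
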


%\begin{remark}
%The virtue of the above definition of cellular, stratified quasi-fibration is that it does not, a priori, posit any compatibility between the cell structures on the different fibres. We pick any cell structures on the fibres and build a new homotopy model $X'$ for $X$ on which these cell structures are compatible. 
%\end{remark}
\begin{proof}
We will construct the filtered complex $X'$ by induction on the filtration. To start, set $X'_0:=X_0$ which is the disjoint union $\coprod_v v\times X_v$ where the union is over the vertices of $Z$ and each $X_v$ is equipped with the chosen cell structure. Suppose we have constructed a cell complex $X'_{i-1}$, and a homotopy equivalence $L_{i-1}:X_{i-1}\rightarrow X'_{i-1}$ that is the identity on $X_0$. % and a map $p':X'_{i-1}\rightarrow Z$. 
Now, set
$$
X'_i:=X'_{i-1}\cup_{\varphi'}\left(\coprod_{|\sigma|=i}D^i\times X_{\sigma}\right)
$$
where %the union is over all the $i$-cells in $Z$, 
\begin{itemize}
\item 
$D^i$ is given the cell structure with one $0$-cell, one $(i-1)$-cell, and one $i$-cell, 
\item 
$D^i\times X_{\sigma}$ is equipped with the product of this structure and the chosen cell structure on $X_{\sigma}$, and
\item 
the gluing is done via a cellular approximation $\varphi'$ of the map 
$$
\coprod_{|\sigma|=i}(S^{i-1}\times X_{\sigma})\stackrel{\varphi_i}\longrightarrow X_{i-1}\stackrel{L_{i-1}}\longrightarrow X'_{i-1}.
$$
\end{itemize}
The next lemma finishes the inductive step and hence the proof of the proposition. 
\begin{lemma}
\label{homotopyeq}
The space $X_i$ is homotopy equivalent to $X_i'$ relative to $X_0$.% Moreover, we may pick $L_i$ and a homotopy inverse $R_i:X_i'\rightarrow X_i$ such that $L_i$ and $R_i$ restrict to the identity map on $X_0=X_0'$. 
\end{lemma}
\begin{proof}
To conserve notation, set $\Sigma:=\coprod_{|\sigma|=i}D^i\times X_{\sigma}$. Note that the mapping cylinder $M(L_{i-1})$ of $L_{i-1}:X_{i-1}\rightarrow X'_{i-1}$ deformation retracts onto $X'_{i-1}$ via a deformation retraction $r'_t$. Moreover, since $L_{i-1}$ is a homotopy equivalence, $M(L_{i-1})$ also deformation retracts onto $X_{i-1}$ via a deformation retraction $r_t$ (Corollary 0.21 of \cite{hatcherbook}). % has two deformation retractions: the usual one $r:M(L_{i-1})\rightarrow X'_{i-1}$ and also  
Now, we have a composition of homotopy equivalences
\begin{eqnarray*}
X_i&=&%M(\varphi_i)\cup_{\amalg S^{i-1}\times X_{\sigma}}\coprod D^i\times X_{\sigma}\\
%&\sim&M(\varphi_i)\cup_{\varphi_i}\coprod D^i\times X_{\sigma}\\
%&\sim&
X_{i-1}\cup_{\varphi_i}\Sigma\\%\coprod D^i\times X_{\sigma}\\
&\hookrightarrow&M(L_{i-1})\cup_{\varphi_i}\Sigma\\%\coprod D^i\times X_{\sigma}\\
&\sim&M(L_{i-1})\cup_{\varphi'}\Sigma\\%\coprod D^i\times X_{\sigma}\\
%&\sim&X'_{i-1}\cup_{L_{i-1}\circ\varphi_i}\coprod D^i\times X_{\sigma}\\
&\stackrel{r'_1}\rightarrow&X'_{i-1}\cup_{\varphi'}\Sigma\\
&=&X'_i.%\coprod D^i\times X_{\sigma}
\end{eqnarray*}
Here, the first inclusion is a homotopy equivalence since $r_t$ defines a deformation retraction from $M(L_{i-1})\cup_{\varphi_i}\Sigma$ to $X_{i-1}\cup_{\varphi_i}\Sigma$.
Since $\varphi_i$ is homotopic (in the mapping cylinder) to $L_{i-1}\circ\varphi_i$ which is homotopic to $\varphi'$, there is a homotopy equivalence $H$ relative to $M(L_{i-1})$ representing $\sim$.
%There is a map representing $\sim$ that is a homotopy equivalence that is the identity on $M(L_{i-1})$ since $\varphi_i$ is homotopic (in the mapping cylinder) to $L_{i-1}\circ\varphi_i$ which is homotopic to $\varphi'$ (Proposition 0.18 of \cite{hatcherbook}). 
Finally, $r_1'$ is a homotopy equivalence since $r'_t$ defines a deformation retraction from $M(L_{i-1})\cup_{\varphi'}\Sigma$ to $X'_{i-1}\cup_{\varphi'}\Sigma$. Note that $r_1'$ sends $X_0$ in $X_{i-1}$ identically to $X_0$ in $X_i'$ since $L_{i-1}$ is the identity on $X_0$. In summary, we obtain a composite homotopy equivalence $L_i:X_i\rightarrow X'_i$ that restricts to the identity on $X_0.$

To conclude that $L_i$ is a homotopy equivalence relative to $X_0$, observe that (by hypothesis) $X_0$ has a mapping cylinder neighborhood in $X_i$ and (by construction) $X_0$ is a subcomplex of $X'_i$. Therefore both pairs $(X_i,X_0)$ and $(X'_i,X_0)$ have the homotopy extension property (see Example 0.15 and Proposition 0.16 in \cite{hatcherbook}). So, Proposition 0.19 of \cite{hatcherbook} implies $L_i$ is a homotopy equivalence rel $X_0$. 
\end{proof}
\end{proof}
%So we can pick a homotopy equvalence $L_i:X_i\rightarrow X_i'$ and proceed to complete the inductive construction of $X'$. So, we arrive at the following.

%\subsection*{Reference map to classifing space $BG$}

\subsection{Cellular chain complex of $X'$}
Recall that $BG$ is an aspherical complex with fundamental group $G$. We fix a cell structure on it once and for all. We are given a map $\phi:X\rightarrow BG$, a cell complex $X'$ containing $X_0$ and two homotopy equivalences relative to $X_0$: $R:X'\rightarrow X$ and $L:X\rightarrow X'$ such that $RL$ is homotopic to the identity relative to $X_0$. 

Compose the map $\phi$ with a homotopy equivalence $R:X'\rightarrow X$ relative to $X_0=X'_0$ provided by Proposition \ref{cellularprop} and use the map $\phi R:X'\rightarrow BG$ to construct the cellular chain complex on $G$-covers of $X'_i$ (the fibre product $X'\times_{BG}EG$).

More algebraically, this is the cellular chain complex $C_*(X'_i;\mathbb K[G])$ with coefficients in the group ring of $G$, defined via the reference map $\phi R$. 

\subsection*{Notation:} To simplify notation, in the remainder of the paper for any cell complex $Y$ with reference map $\phi: Y\rightarrow BG$, we let $C_*(Y)$ stand for the cellular chain complex with coefficients in $\mathbb K[G]$. When we want to emphasize the reference map $\phi$ used in the construction of the chain complex, we will also write $C_*^\phi(Y)$. This will be relevant in the proof of Theorem \ref{maptheorem} in Section 5, where we compare two chain maps defined via different reference maps $\phi$ and $\phi RL$ from $X$ to $BG$.

\vspace{0.3cm}

On the level of cellular chain complexes, we have 
$$
C_*(X'_i)=C_*(X'_{i-1})\oplus\bigoplus_{|\sigma|=i}\sigma\times C_{*-i}(X_{\sigma}).
$$
So, as an abelian group $C_*(X'_i)=\oplus_{|\sigma|\leq i}\sigma\times C_{*-|\sigma|}(X_{\sigma})$ is generated by product cells. For an $i$-cell $\sigma$ in $Z$ and a chain $c\in C_*(X_{\sigma})$, the boundary map has the form 
%first write the boundary $\partial\sigma=\sum_\tau\sigma_\tau\cdot \tau$ as a sum of $(|\sigma|-1)$-cells in $Z$. Then 
\begin{equation}
\tag{*}
\label{leibnitzformula}
\partial(\sigma\times c)=\varphi(\partial\sigma\times c)+(-1)^{|\sigma|}\sigma\times\partial c
\end{equation}
where $\varphi:\partial\sigma\times X_{\sigma}\rightarrow X'_{i-1}$ is the attaching map. We may write  
$$
\varphi(\partial\sigma\times c)=\sum_{|\tau|<|\sigma|}\tau\times c_{\tau} 
$$
for $c_{\tau}\in C_*(X_{\tau})$. We will need the following observation.
\begin{comment}
\begin{remark}
Actually, we only end up using the cycle lemma, not the boundary lemma. 
\end{remark}
\begin{lemma}
\label{boundarylemma}
If $|\tau|=|\sigma|-1$ and $c$ is a boundary, then $c_{\tau}$ is a boundary. 
\end{lemma}
\begin{proof}
Suppose that $c=(-1)^{|\partial\sigma|}\partial b$. Then 
$$
\sum_{|\tau|<|\sigma|}\tau\times c_{\tau}=\varphi(\partial\sigma\times c)=\varphi(\partial(\partial\sigma\times b))
=\partial\varphi(\partial\sigma\times b)=\sum_{|\tau|<|\sigma|}\partial(\tau\times b_{\tau}).
$$
%Note that $\partial(\tau\times b_{\tau})=(-1)^{|\tau|}\tau\times \partial(b_{\tau})+\sum_{|\rho|<|\tau|}\rho\times (b_{\tau})_{\rho}$. 
If $|\tau|=|\sigma|-1$, compare $\tau$-coefficients to get $c_{\tau}=(-1)^{|\tau|}\partial(b_{\tau})$. So, $c_{\tau}$ is a boundary.  
\end{proof}
\end{comment}

\begin{lemma}
\label{cyclelemma}
If $|\tau|=|\sigma|-1$ and $c$ is a cycle, then $c_{\tau}$ is a cycle. 
\end{lemma}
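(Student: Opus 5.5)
The plan is to use $\partial^2=0$ applied to the product cell $\sigma\times c$ and compare components on the product cells $\tau\times(\cdot)$ with $|\tau|=|\sigma|-1$. Assume $c\in C_*(X_\sigma)$ is a cycle. By formula (\ref{leibnitzformula}), $\partial(\sigma\times c)=\varphi(\partial\sigma\times c)=\sum_{|\tau|<|\sigma|}\tau\times c_\tau$, because the second term vanishes. Applying $\partial$ once more gives
$$
0=\partial\partial(\sigma\times c)=\sum_{|\tau|<|\sigma|}\partial(\tau\times c_\tau).
$$

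Next I expand each summand using (\ref{leibnitzformula}) for the cell $\tau$:
$$
\partial(\tau\times c_\tau)=(-1)^{|\tau|}\tau\times\partial c_\tau+\varphi(\partial\tau\times c_\tau).
$$
The key filtration observation is that $\varphi(\partial\tau\times c_\tau)$ lies in $C_*(X'_{|\tau|-1})$. The attaching map for cells over $\tau$ lands in $X'_{|\tau|-1}$, and $\varphi'$ was chosen cellular, so the image chain is a combination of product cells $\rho\times(\cdot)$ with $|\rho|\le|\tau|-1$. Thus the only contributions in the summand $\bigoplus_{|\tau|=|\sigma|-1}\tau\times C_*(X_\tau)$ of the direct sum decomposition $C_*(X'_{|\sigma|-1})=\bigoplus_{|\rho|\le|\sigma|-1}\rho\times C_{*-|\rho|}(X_\rho)$ are the terms $(-1)^{|\tau|}\tau\times\partial c_\tau$ coming from the top-dimensional $\tau$. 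Terms with $|\tau|<|\sigma|-1$ contribute only to lower strata as well.

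Projecting the identity $0=\sum\partial(\tau\times c_\tau)$ onto the $\tau$-component for a fixed $\tau$ with $|\tau|=|\sigma|-1$ yields $(-1)^{|\tau|}\tau\times\partial c_\tau=0$. Since $\tau\times(-)$ identifies $C_{*}(X_\tau)$ with a direct summand (the cells $\tau^\circ\times e$ are distinct cells of $X'$), this gives $\partial c_\tau=0$, so $c_\tau$ is a cycle.

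The only point needing care, and the main one I would check, is that the decomposition of $C_*(X'_i)$ into product pieces is a genuine direct sum, compatible with coefficients in $\mathbb K[G]$ via the lifted reference map. I also need that the boundary of a cell over $\tau$ has no component over a different top-dimensional $\tau'$ except through $\tau\times\partial c_\tau$. Both follow from the inductive construction in Proposition \ref{cellularprop}, where $X'_i\setminus X'_{i-1}=\coprod\sigma^\circ\times X_\sigma$ and the attaching maps land in $X'_{i-1}$.
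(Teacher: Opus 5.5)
Your proof is correct and follows the paper's argument: both reduce to $0=\sum_{|\tau|<|\sigma|}\partial(\tau\times c_{\tau})$ and read off the $\tau$-component $(-1)^{|\tau|}\tau\times\partial c_{\tau}$, and your filtration bookkeeping spells out what the paper's ``compare $\tau$-coefficients'' leaves implicit. The only cosmetic difference is how the sum is shown to vanish: you use $\partial^2(\sigma\times c)=0$ together with $\partial(\sigma\times c)=\varphi(\partial\sigma\times c)$, whereas the paper uses that $\varphi$ commutes with $\partial$, so that $\partial\varphi(\partial\sigma\times c)=\pm\varphi(\partial\sigma\times\partial c)=0$.
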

\begin{proof}
Suppose $\partial c=0$. Then
$$
0=(-1)^{|\partial\sigma|}\varphi(\partial\sigma\times\partial c)=\partial\varphi(\partial\sigma\times c)=\sum_{|\tau|<|\sigma|}\partial(\tau\times c_{\tau}). 
$$
If $|\tau|=|\sigma|-1$, compare $\tau$-coefficients to get $0=(-1)^{|\tau|}\partial(c_{\tau})$. So, $c_{\tau}$ is a cycle.  
\end{proof}

%\begin{remark}
%Need to find a cleaner way to write the proof of this lemma...
%\end{remark}
%\begin{remark}
%The chain complex relation $\partial ^2=0$ implies that
%$h_{\rho\tau}\circ h_{\tau\sigma}=h_{\rho\tau'}\circ h_{\tau'\sigma}$.  
%\end{remark}
\section{Chain truncation on fibres\label{chaintruncation}}
In this section, suppose that $\mathbb K[G]$ is an $n$-fir. The $n$-fir property implies $n$-generated submodules of free $\mathbb K[G]$-modules are free by the argument in Theorem 33.2 of \cite{avramididelzanthyperbolic}. We will use this property to truncate cellular chain complexes, if the underlying cell complexes have at most $n$ cells of a given dimension $k>0$.
%Suppose that each $X_{\sigma}$ has at most $n$ $k$-cells. 
\begin{lemma}
\label{freelemma}
If $X_{\sigma}$ has at most $n$ $k$-cells, then $\partial C_{k}(X_\sigma)$ is a free $\mathbb K[G]$-module. 
\end{lemma}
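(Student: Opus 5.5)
The plan is to exhibit $\partial C_k(X_\sigma)$ as an $n$-generated submodule of a free $\mathbb K[G]$-module and then invoke the consequence of the $n$-fir property recalled at the start of this section: $n$-generated submodules of free $\mathbb K[G]$-modules are free (argument of Theorem 33.2 of \cite{avramididelzanthyperbolic}).

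First, describe the cellular chain modules concretely. The reference map $X_\sigma\hookrightarrow X'\xrightarrow{\phi R} BG$ induces the $G$-cover of $X_\sigma$. Choose a lift of each cell of $X_\sigma$ to this cover. Then $C_j(X_\sigma;\mathbb K[G])$ is the free $\mathbb K[G]$-module with basis these lifts, one basis element for each $j$-cell of $X_\sigma$. In particular, $C_k(X_\sigma)$ is free of rank at most $n$, and $C_{k-1}(X_\sigma)$ is free of rank equal to the number of $(k-1)$-cells (finite, or at any rate free).

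Second, $\partial:C_k(X_\sigma)\to C_{k-1}(X_\sigma)$ is $\mathbb K[G]$-linear. Its image is therefore generated by the images of the at most $n$ basis elements, so $\partial C_k(X_\sigma)$ is an $n$-generated submodule of the free module $C_{k-1}(X_\sigma)$. Since $k>0$, the target $C_{k-1}$ is a genuine chain module. Applying the quoted consequence of the $n$-fir hypothesis gives freeness. If fewer than $n$ generators occur, we can pad with zeros, since $m$-generated for $m\le n$ is also $n$-generated.

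The only step needing care is the passage from ``$n$-generated left ideals are free'' to ``$n$-generated submodules of free modules are free''. This is what the cited Theorem 33.2 argument supplies, and the paper tells us we may assume it. Beyond that, the one thing to check is that left/right module conventions for the cellular chain complex match those of the fir property. If they do not, we use the standard fact that the $n$-fir condition is left–right symmetric, or pass through the involution $g\mapsto g^{-1}$ of $\mathbb K[G]$.
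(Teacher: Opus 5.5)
Your proposal is correct and follows the paper's own argument: $\partial C_k(X_\sigma)$ is an $n$-generated submodule of the free module $C_{k-1}(X_\sigma)$, hence free by the $n$-fir property, through the consequence cited from Theorem 33.2 of \cite{avramididelzanthyperbolic}. Your additional remarks on padding with zeros and on left/right conventions are harmless extras that the paper leaves implicit.
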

\begin{proof}
The $\mathbb K[G]$-module $\partial C_{k}(X_{\sigma})$ is an $n$-generated submodule of the free module 
$C_{k-1}(X_\sigma)$, so it is free since $\mathbb K[G]$ is an $n$-fir. 
\end{proof}
Now, let $\phi_v$ be a cellular approximation of the restriction $\phi\mid_{X_v}:X_v\rightarrow BG$. We will use Lemma \ref{freelemma} to show that $\phi_v$ can be truncated in degree $k$ to a chain map that is zero on $C_{>k}$. The argument is the one given in Theorem 40 of \cite{avramididelzanthyperbolic}, so we will be brief. 
\begin{lemma}
\label{vertexchainmap}%[Theorem 40 in \cite{avramididelzanthyperbolic}]
%can be truncated in degree $k$ to a chain map that is zero on $C_{>k}$. %whose image lies in the $k$-skeleton of $BG$ 
%More precisely, 
Suppose $X_v$ has at most $n$ $k$-cells. Then, there is a map $$b_v:\partial C_{k}(X_v)\rightarrow C_{k}(BG)$$ such that %(obtained by lifting $\iota_{v}\circ\partial$, which can be done since $\partial C_k$ is free) such that 
\begin{eqnarray*}
f_v:C_*(X_v)&\rightarrow& C_*(BG)\\
c&\mapsto&\left\{
\begin{array}{ccc}
\phi_v(c)&\mbox{ if }&|c|<k,\\
b_{v}\partial (c)&\mbox{ if }&|c|=k,\\
0&\mbox{ if }&|c|>k,
\end{array}\right.
\end{eqnarray*}
is a chain map.
\end{lemma}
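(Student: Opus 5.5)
The plan is to construct $b_v$ by lifting along the boundary map and then to check the chain map identity degree by degree. By Lemma \ref{freelemma}, the module $B:=\partial C_k(X_v)\subset C_{k-1}(X_v)$ is a free $\mathbb K[G]$-module, hence projective. Since $\phi_v$ is a cellular chain map, consider the composite $\phi_v|_B:B\rightarrow C_{k-1}(BG)$. For $x=\partial c\in B$ we have $\phi_v(x)=\phi_v(\partial c)=\partial\phi_v(c)$, so the image of $\phi_v|_B$ lies in $\partial C_k(BG)$. Thus $\phi_v|_B$ factors through the surjection $\partial:C_k(BG)\rightarrow \partial C_k(BG)$. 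By freeness of $B$ (choose a basis, pick preimages of the images of basis elements), there is a $\mathbb K[G]$-linear map $b_v:B\rightarrow C_k(BG)$ with $\partial b_v=\phi_v|_B$. This is the definition of $b_v$.

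We then verify $\partial f_v=f_v\partial$ in each degree. In degrees $<k$ it holds because $f_v=\phi_v$ there and $\phi_v$ is a chain map. In degree $k$, for $c\in C_k(X_v)$ we get $\partial f_v(c)=\partial b_v(\partial c)=\phi_v(\partial c)=f_v(\partial c)$, using $|\partial c|=k-1<k$. In degree $k+1$, for $c\in C_{k+1}(X_v)$ we have $f_v(\partial c)=b_v(\partial\partial c)=0=\partial f_v(c)$. In degrees $>k+1$ both sides vanish. The map $f_v$ is also $\mathbb K[G]$-linear in each degree, since $b_v$ and $\partial$ are.

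The only nontrivial point is the existence of the lift $b_v$. This needs two facts: that $\phi_v(B)\subset \partial C_k(BG)$, which follows from the chain map property, and that $B$ is projective, which is exactly where the $n$-fir hypothesis enters through Lemma \ref{freelemma}. Note that $k>0$, so $C_{k-1}$ makes sense. No further obstacle is expected, since the remaining checks are routine.
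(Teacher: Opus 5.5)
Your proposal is correct and follows the paper's proof. Both use Lemma \ref{freelemma} to lift $\phi_v|_{\partial C_k(X_v)}$ through $\partial: C_k(BG)\rightarrow \partial C_k(BG)$, which is possible because $\phi_v$ is a chain map. Your degree-by-degree check just spells out what the paper encodes in its commutative truncation diagram.
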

\begin{proof}
The image of the restriction $\phi_v:\partial C_k(X_v)\rightarrow C_{k-1}(BG)$ is in the image of $\partial$ since $\phi_v$ defines a chain map. Since $\partial C_k(X_v)$ is free, it has a lift $b_v:\partial C_k(X_v)\rightarrow C_{k}(BG)$. %Set $f_v=b_v\partial$. Then $f_v$ is a lift of $\phi\partial\mid_{X_v}$, i.e. 
%\begin{itemize}
%\item 
%$\partial f_v=\phi\partial\mid_{X_v}$,
%\end{itemize}
%and
%\begin{itemize}
%\item 
%$f_v\partial=b_v\partial^2=0$.
%\end{itemize}
For this $b_v$ the following ``truncation diagram'' commutes
$$
\begin{array}{ccccc}
C_{k+1}(X_v)&\stackrel{\partial}\rightarrow &C_k(X_v)&\stackrel{\partial}\rightarrow &C_{k-1}(X_v)\\
\downarrow&&\partial\downarrow&&||\hspace{0.2cm}\\
0&\rightarrow&\partial C_k(X_v)&\hookrightarrow&C_{k-1}(X_v)\\
\downarrow&&b_v\downarrow&&\downarrow\phi_v\\
C_{k+1}(BG)&\stackrel{\partial}\rightarrow &C_k(BG)&\stackrel{\partial}\rightarrow& C_{k-1}(BG),
\end{array}
$$
which implies that the map $f_v$ defined in the lemma is a chain map. 
\end{proof}

\section{A chain map on $X'$\label{chainmapF}}
%Suppose that for each $\sigma$, $X_\sigma$ has a cell structure with at most $n$ $k$-cells. Lemma \ref{vertexchainmap} produces a chain truncation $f_v$ of $C_*(X_v)$ above degree $k$.
%$$
%k:=\max_vk_v.
%$$
%Now, we will assemble these truncations into a chain map 
%$$
%F:C_*(X')\rightarrow C_*(BG)
%$$
%that is zero on $C_{>k+\dim(Z)}$.
In this section, we explain how to glue the local maps $f_v$ defined on $C_*(X_v)$ in the previous section to construct a global chain map on $C_*(X')$. Recall that $k$ is fixed once and for all and that we assume that every cell complex $X_{\sigma}$ has at most $n$ cells in dimension $k$. 

\begin{proposition}
\label{deform}
There exists a chain map $F:C_*(X')\rightarrow C_*(BG)$ that is zero in all dimensions $>k+\dim Z$, whose restriction to $C_*(X_v)$ is $f_v$ for each vertex $v$. 
\end{proposition}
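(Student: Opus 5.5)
We build $F$ by induction on the filtration $X'_0\subset X'_1\subset\dots$, constructing chain maps $F_i:C_*(X'_i)\rightarrow C_*(BG)$ with $F_i$ zero in degrees $>k+i$ and $F_i|_{C_*(X'_{i-1})}=F_{i-1}$. Start with $F_0=\bigoplus_v f_v$ on $C_*(X'_0)=\bigoplus_v C_*(X_v)$; this is a chain map by Lemma~\ref{vertexchainmap} and vanishes above degree $k$. Then set $F=F_{\dim Z}$.

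For the inductive step, use the splitting $C_*(X'_i)=C_*(X'_{i-1})\oplus\bigoplus_{|\sigma|=i}\sigma\times C_{*-i}(X_\sigma)$. Each summand $\sigma\times C_j(X_\sigma)$ is a free $\mathbb K[G]$-module on the product cells. So it suffices to define $F_i(\sigma\times e)$ for each cell $e$ of $X_\sigma$, by induction on $j=|e|$, so that
$$
\partial F_i(\sigma\times e)=F_{i-1}\bigl(\varphi(\partial\sigma\times e)\bigr)+(-1)^{i}F_i(\sigma\times\partial e),
$$
which is formula (\ref{leibnitzformula}) pushed forward. The right-hand side has already been defined. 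Moreover, it is a cycle: applying $\partial$ and using $\partial^2=0$ together with the chain map property of $F_{i-1}$ and of $F_i$ in lower degrees makes it vanish. This is the usual acyclic-carrier computation. Since $BG$ is aspherical, $\widetilde{BG}$ is contractible, so $C_*(BG)$ is acyclic in positive degrees and the cycle bounds. Freeness lets us choose $F_i(\sigma\times e)$ as any preimage. In degree $0$ we also need the augmentation to be compatible, which holds because the maps come from $\phi R$.

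The main obstacle is the vanishing requirement. We must set $F_i(\sigma\times e)=0$ whenever $i+|e|>k+i$, i.e.\ whenever $|e|>k$. With this choice we also take $F_i(\sigma\times e)=0$ for $|e|=k$ and $i+k>k+i$, which never happens, so cells with $|e|\le k$ are handled by the lifting above. For $|e|\ge k+1$ we need the right-hand side above to be zero, not merely a boundary. The term $F_i(\sigma\times\partial e)$ vanishes when $|\partial e|>k$, i.e.\ $|e|>k+1$, while $F_{i-1}(\varphi(\partial\sigma\times e))=\sum_\tau F_{i-1}(\tau\times e_\tau)$ is a chain of degree $i-1+|e|>k+i-1$ and hence vanishes by induction.

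The delicate degree is $|e|=k+1$, where we need $(-1)^iF_i(\sigma\times\partial e)=0$ modulo nothing. To handle this, choose $F_i$ on $\sigma\times C_k(X_\sigma)$ to factor through $\partial$, mimicking Lemma~\ref{vertexchainmap}. Concretely, define $F_i(\sigma\times c)$ for $c\in C_k(X_\sigma)$ as $\beta_\sigma(\partial c)$, where $\beta_\sigma$ is defined on the free module $\partial C_k(X_\sigma)$ (free by Lemma~\ref{freelemma}). To do this, lift the map
$$
\partial c\mapsto F_{i-1}\bigl(\varphi(\partial\sigma\times c)\bigr)+(-1)^iF_i(\sigma\times\partial c)
$$
through $\partial:C_{k+i}(BG)\rightarrow C_{k+i-1}(BG)$.

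For this map to be well defined on $\partial C_k(X_\sigma)$, we must check that it kills $c$ whenever $c$ is a cycle. The second term vanishes in that case. For the first, Lemma~\ref{cyclelemma} shows that the top-dimensional coefficients $c_\tau$ with $|\tau|=i-1$ are cycles of degree $k$ in $X_\tau$. By the inductive hypothesis $F_{i-1}(\tau\times c_\tau)$ then vanishes, provided we carry the statement ``$F_{i-1}(\tau\times z)=0$ for $k$-cycles $z$'' as part of the induction. This holds at vertices since $f_v=b_v\partial$. Lower-$\tau$ terms have degree $>k+|\tau|$ and vanish. Once well defined, the map lands in cycles, so it lifts by freeness and acyclicity, and it again kills $k$-cycles. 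This propagates the extra hypothesis and gives $F_i(\sigma\times\partial e)=\beta_\sigma(\partial^2e)=0$, completing the induction.
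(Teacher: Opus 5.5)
Your proposal is correct and is essentially the paper's proof of Proposition~\ref{assembleprop}. It uses the same three cases: lift below fibre degree $k$, factor through the free module $\partial C_k(X_\sigma)$ in degree $k$, and set $F=0$ above $k$, with the same auxiliary hypothesis that $F(\tau\times z)=0$ for $k$-cycles $z$ propagated via Lemma~\ref{cyclelemma}. The only difference is bookkeeping: you induct on the filtration level $|\sigma|$ and then on the fibre degree, while the paper inducts on the total degree $|\sigma|+|c|$; both orders work because $\partial(\sigma\times e)$ only involves $X'_{|\sigma|-1}$ and $\sigma\times\partial e$.
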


Recall that, as an abelian group, 
$$
C_d(X')=\bigoplus_{|\sigma|+i=d}\sigma\times C_i(X_{\sigma}),
$$
represented by products of the form $\sigma\times c$ where $|\sigma|+|c|=d$. We will define $F$ by induction on $d$. Recall also that the {\it augmented chain complex} is obtained from $C_*(X')$ by adding $C_{-1}(X')=\mathbb K$, with the augmentation map $\varepsilon:C_0\rightarrow C_{-1}$ that sends each $0$-cell to $1\in\mathbb K$. 

Our argument repeatedly uses the following lifting property. 
\begin{lemma}
\label{lift}
A $\mathbb K[G]$-module map $\psi:V\rightarrow C_{d-1}(BG)$ lifts to $C_d(BG)$ if
\begin{itemize}
\item 
$V$ is a free $\mathbb K[G]$-module, and
\item 
$\varepsilon\psi=0$ (if $d=1$) or $\partial\psi=0$ (if $d>1$).
\end{itemize}
\end{lemma}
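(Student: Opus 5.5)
The lifting property follows from two facts: the cellular chain complex of $EG$ is acyclic, and free modules are projective. Since $BG$ is aspherical, its universal cover $EG$ is contractible. Hence the augmented cellular chain complex
$$
\cdots\rightarrow C_d(BG)\stackrel{\partial}\rightarrow C_{d-1}(BG)\rightarrow\cdots\rightarrow C_0(BG)\stackrel{\varepsilon}\rightarrow\mathbb K\rightarrow 0,
$$
with coefficients in $\mathbb K[G]$ (that is, the cellular chain complex of $EG$ over $\mathbb K$ with its $G$-action), is exact. In particular $\ker(\varepsilon)=\partial C_1(BG)$, and $\ker(\partial:C_{d-1}\rightarrow C_{d-2})=\partial C_d(BG)$ for $d>1$.

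The plan has two steps. First, the hypothesis ($\varepsilon\psi=0$, respectively $\partial\psi=0$) says that $\psi$ takes values in the relevant kernel. By exactness, that kernel is the image of $\partial:C_d(BG)\rightarrow C_{d-1}(BG)$, so $\psi$ factors through this image. Second, $V$ is free on some basis $\{e_\alpha\}$. For each $\alpha$, choose $y_\alpha\in C_d(BG)$ with $\partial y_\alpha=\psi(e_\alpha)$, and define the lift $\tilde\psi:V\rightarrow C_d(BG)$ as the unique $\mathbb K[G]$-linear map with $\tilde\psi(e_\alpha)=y_\alpha$. Then $\partial\tilde\psi$ and $\psi$ agree on a basis, so $\partial\tilde\psi=\psi$.

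The only point needing care is the exactness statement itself, which is where asphericity enters. $H_*(EG;\mathbb K)=0$ in positive degrees and $H_0(EG;\mathbb K)=\mathbb K$ via $\varepsilon$, because $EG$ is contractible. Also, the $\mathbb K[G]$-module $C_*(BG;\mathbb K[G])$ is precisely $C_*(EG;\mathbb K)$. With these two facts, exactness of the augmented complex is immediate, and the rest of the argument is formal.
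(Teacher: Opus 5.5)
Your proposal is correct and follows essentially the same route as the paper. The paper likewise uses asphericity to see that the augmented cellular chain complex of the universal cover is a free resolution of $\mathbb K$, so that individual cycles (respectively augmentation-zero chains) lift, and then lifts the map on a basis of the free module $V$.
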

\begin{proof}
Since $BG$ is aspherical, the cellular chain complex on the universal cover of $BG$ gives a free $\mathbb K[G]$-resolution of $\mathbb K$
$$
\dots \stackrel{\partial}\rightarrow C_1(BG)\stackrel{\partial}\rightarrow C_0(BG)\stackrel{\varepsilon}\rightarrow \mathbb K\rightarrow 0.
$$
Therefore, the second bullet point lets us lift individual elements. Since the domain $V$ is free, we can apply this to a basis for $V$ to lift the map $\psi$.
\end{proof}

Proposition \ref{deform} is a consequence of the following, more detailed result. 
\begin{proposition}
\label{assembleprop}
There is a map $F:C_*(X')\rightarrow C_*(BG)$ of augmented %\footnote{The augmented chain complex has $C_{-1}(-)=\mathbb K$, with the boundary map $C_0\rightarrow C_{-1}$ given by the augmentation map $\varepsilon$ which sends each $0$-cell to $1\in\mathbb K$.} 
chain complexes such that 
\begin{enumerate}
\item 
$F\mid_{\mathbb K}=\phi\mid_{\mathbb K}$ in degree $-1$,
\item 
$F(v\times c)=f_v(c)$,
%\item 
%$$
%\partial F(\sigma\times c)=F(\partial\sigma\times c)+(-1)^{|\sigma|}F(\sigma\times\partial c),
%$$
\item
$F(\sigma\times c)=0$ if $|c|=k$ and $c$ is a cycle,
%if $|c|=k_{\sigma}$, then there is $b_{\sigma}:\partial C_{k_{\sigma}}(X_{\sigma})\rightarrow C_{k_{\sigma}+|\sigma|}(BG)$ such that 
%$$
%F(\sigma\times c)=b_{\sigma}\partial(c),$$ 
\item 
%if $|c|>k_{\sigma}$ then
$F(\sigma\times c)=0$ if $|c|>k$,
\item 
$\partial F(\sigma\times c)=F\partial(\sigma\times c)$ for $|\sigma\times c|\geq 1$, and 
\item 
$\varepsilon F(\sigma\times c)=F\varepsilon(\sigma\times c)$ for $|\sigma\times c|=0$.
\end{enumerate}
\end{proposition}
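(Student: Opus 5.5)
We construct $F$ on the generators $\sigma\times c$ of $C_*(X')$ by induction on the total degree $d=|\sigma|+|c|$. Within each degree we treat the vertex part $|\sigma|=0$ separately, and otherwise split according to whether $|c|<k$, $|c|=k$ or $|c|>k$. Throughout, the only tools are Lemma \ref{lift} (lifting from a free module against $\partial$ or $\varepsilon$), Lemma \ref{freelemma} (freeness of $\partial C_k(X_\sigma)$), Lemma \ref{cyclelemma}, and the Leibniz formula (\ref{leibnitzformula}). Note that once (4) holds, $F(\sigma\times c)\neq 0$ forces $|c|\le k$, hence $d\le k+\dim Z$. This gives Proposition \ref{deform}.

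\emph{Vertices and degree $-1$.} On $\mathbb K$ we set $F=\phi$, i.e.\ the identity of $\mathbb K$. On $v\times C_*(X_v)$ we set $F=f_v$ from Lemma \ref{vertexchainmap}. This is a chain map, and it is compatible with $\varepsilon$ because it agrees in degree $0$ with the cellular map $\phi_v$. It vanishes above degree $k$, and on a $k$-cycle $c$ we have $f_v(c)=b_v\partial c=0$. So (1)--(6) hold for $|\sigma|=0$.

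\emph{Cells with $|\sigma|\ge1$ and $|c|>k$.} We set $F(\sigma\times c)=0$ and must check (5), i.e.\ $F\partial(\sigma\times c)=0$. By (\ref{leibnitzformula}), $\partial(\sigma\times c)=\sum_{|\tau|<|\sigma|}\tau\times c_\tau\pm\sigma\times\partial c$.

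1. Each $c_\tau$ has degree $|c|+|\sigma|-1-|\tau|\ge |c|>k$, so $F(\tau\times c_\tau)=0$ by induction via (4).
2. The chain $\partial c$ either has degree $>k$, or it is a $k$-cycle (when $|c|=k+1$). In both cases $F(\sigma\times\partial c)=0$ by induction via (3)/(4).

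\emph{Cells with $|\sigma|\ge1$ and $|c|<k$.} Here $\sigma\times c$ runs over a $\mathbb K[G]$-basis of cells. For each basis cell we set $\psi(\sigma\times c)=F\partial(\sigma\times c)$, which is already defined by induction. The inductive chain-map property gives $\partial\psi=F\partial\partial=0$, or $\varepsilon\psi=F\varepsilon\partial=0$ in degree $1$. Lemma \ref{lift} then lifts $\psi$, and we define $F$ as this lift, so that (5)/(6) hold.

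\emph{Cells with $|\sigma|\ge1$ and $|c|=k$.} This is the key step and the main obstacle, since (3) must hold along with (5). We want $F(\sigma\times c)=b_\sigma(\partial c)$ for a $\mathbb K[G]$-map $b_\sigma:\partial C_k(X_\sigma)\to C_{k+|\sigma|}(BG)$. To build $b_\sigma$, define $\Psi:C_k(X_\sigma)\to C_{k+|\sigma|-1}(BG)$ by $\Psi(c)=F\partial(\sigma\times c)$. We first show $\Psi$ kills $k$-cycles, so that it factors through $\partial:C_k(X_\sigma)\to\partial C_k(X_\sigma)$. If $\partial c=0$, then $\partial(\sigma\times c)=\sum_{|\tau|<|\sigma|}\tau\times c_\tau$, and there are two kinds of terms:

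1. For $|\tau|=|\sigma|-1$, the chain $c_\tau$ is a $k$-cycle by Lemma \ref{cyclelemma}, so $F(\tau\times c_\tau)=0$ by the inductive (3).
2. For $|\tau|<|\sigma|-1$, the chain $c_\tau$ has degree $>k$, so the term vanishes by (4).

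Hence $\Psi=\bar\Psi\circ\partial$ for some $\bar\Psi$ on $\partial C_k(X_\sigma)$. The module $\partial C_k(X_\sigma)$ is free by Lemma \ref{freelemma}. Moreover $\partial\bar\Psi=0$, because $\partial F\partial=F\partial\partial=0$ by induction; here the target degree is $k+|\sigma|-1\ge1$, so we are in the $\partial$ case of Lemma \ref{lift}. Lemma \ref{lift} therefore gives a lift $b_\sigma$ of $\bar\Psi$, and we set $F(\sigma\times c)=b_\sigma\partial c$. This satisfies (5) by construction and (3) trivially, which completes the induction.
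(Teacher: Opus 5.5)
Your proposal is correct and follows the paper's proof essentially step for step. Both argue by induction on $d=|\sigma|+|c|$ with the vertex maps $f_v$ as base, and split into the same three cases: $|c|<k$ (lift via Lemma \ref{lift}), $|c|=k$ (factor $F\partial(\sigma\times-)$ through $\partial$ using Lemma \ref{cyclelemma} and the inductive vanishing, then lift from the free module $\partial C_k(X_\sigma)$), and $|c|>k$ (set $F=0$). The only difference is organizational: the paper collects the vanishing statements and the identity $\partial F\partial=0$ into Lemma \ref{vanishlemma} before the case split, whereas you verify them inside each case.
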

\begin{remark}
The second point says that the map $F$ restricts to $f_v$ on $C_*(X_v)$. The fourth point implies that $F$ is zero above dimension $k+\dim(Z)$. The third point is an auxiliary vanishing property that is used in order to carry out the inductive proof of the Proposition. The fifth and sixth points say that $F$ is a chain map of augmented chain complexes $F:C_*(X')\rightarrow C_*(BG)$. Therefore, this Proposition implies Proposition \ref{deform}.
\end{remark}
\subsection*{Proof of Proposition \ref{assembleprop}}
The first two points define $F$ on $C_{-1}(X')$ and $\oplus_vC_*(X_v)$. For $|\sigma|\geq 1$, we will define $F$ by induction on $d=|\sigma|+|c|$. 
\subsection*{Base case}
The base case is $d=0$, the case when $\sigma$ is a vertex $v$ and $c$ is a $0$-chain. We will do the more general case $v\times c$ where $v$ is a vertex and $c$ is arbitrary. We have to check that the four last properties listed in the Proposition are valid when $\sigma=v$ and we defined $F$ via 
%$$
%F(v\times c):=f_{v}(c),
%$$
$$
F(v\times c):=f_v(c)=\left\{
\begin{array}{ccc}
\phi_v(c)&\mbox{ if }&|c|<k,\\
b_{v}\partial (c)&\mbox{ if }&|c|=k,\\
0&\mbox{ if }&|c|>k.
\end{array}\right.
$$
Observe that,
\begin{itemize}
\item
$F(v\times c)=0$ if $|c|=k$ and $c$ is a cycle, 
%$F(v\times c)=b_v\partial(c)$ for $|c|=k$, and 
\item 
$F(v\times c)=0$ if $|c|>k$,
\end{itemize}
so the third and fourth points are satisfied. 

Moreover, Lemma \ref{vertexchainmap} shows that $F(v\times-)$ is a chain map, i.e. that $\partial F(v\times c)=F(v\times\partial c)$ and hence $\partial F(v\times c)=F\partial(v\times c)$ for $|c|\geq 1$. So, the fifth point is satisfied. Finally, for $|c|=0$ we have $\varepsilon F(v\times c)=\varepsilon\phi_v(c)=\phi_v(\varepsilon(c))=\phi_v(\varepsilon(v\times c))=F(\varepsilon(v\times c))$, so the sixth point is satisfied, as well.  

This finishes the proof of the base case. 

\subsection*{Inductive step}
At this point the map $F$ is already defined on all products $v\times c$, and if $d\geq 1$, $F(\sigma\times c)$ is also defined for $|\sigma|+|c|<d$.
%Suppose we have defined $F$ on all products of the form $v\times c$ and also all $\sigma\times c$ for $|\sigma|+|c|<d$. 
To complete the induction we need to define, for each simplex $\sigma$ of dimension $1\leq|\sigma|\leq d$, a map 
$$
F(\sigma\times -):C_{d-|\sigma|}(X_{\sigma})\rightarrow C_{d}(BG)
$$ 
satisfying the properties in Proposition \ref{assembleprop}. To that end, consider the map $$
F\partial(\sigma\times-):C_{d-|\sigma|}(X_{\sigma})\rightarrow C_{d-1}(BG),
$$
which is already defined since $F$ is already defined on $C_{d-1}(X')$. %As $d-|\sigma|<d$, we have already defined $F(\partial(\sigma\times-))$ on $C_{d-|\sigma|}(X_{\sigma})$.

\begin{lemma}
\label{vanishlemma}
The map 
$
F\partial(\sigma\times-)%:C_{d-|\sigma|}(X_{\sigma})\rightarrow C_{d-1}(BG).
$
satisfies the following properties.
\begin{enumerate}[label=$\mathrm{(\roman*)}$]
\item\label{cycle} 
If $d-|\sigma|=k$ and $c$ is a cycle in $C_{d-|\sigma|}(X_{\sigma})$, then $F\partial(\sigma\times c)=0$, 
\item\label{chainvanish} 
if $d-|\sigma|>k$ and $c$ is a chain in $C_{d-|\sigma|}(X_{\sigma})$, then $F\partial(\sigma\times c)=0$, 
\item\label{chain1} 
$\partial F\partial(\sigma\times-)=0$ for $d>1$, and
\item\label{chain2}
$\varepsilon F\partial(\sigma\times-)=0$ for $d=1$. 
\end{enumerate}
\end{lemma}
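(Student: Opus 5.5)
We expand $F\partial(\sigma\times c)$ with the Leibniz formula (\ref{leibnitzformula}):
$$
F\partial(\sigma\times c)=\sum_{|\tau|<|\sigma|}F(\tau\times c_\tau)+(-1)^{|\sigma|}F(\sigma\times\partial c),
$$
where $\varphi(\partial\sigma\times c)=\sum_{|\tau|<|\sigma|}\tau\times c_\tau$. Every term has total degree $d-1$, so $F$ is already defined on it and satisfies the inductive properties (3)--(6). The key dimension count is that, since $|\tau|<|\sigma|$, we have $|c_\tau|=d-1-|\tau|\geq d-|\sigma|$, with equality exactly when $|\tau|=|\sigma|-1$.

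For \ref{chainvanish}, suppose $d-|\sigma|>k$. Every $c_\tau$ then has degree $>k$, so $F(\tau\times c_\tau)=0$ by property (4). The last term has $|\partial c|=d-|\sigma|-1\geq k$. If this degree is $>k$, the term vanishes by (4). If it equals $k$, then $\partial c$ is a $k$-cycle, so the term vanishes by property (3).

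For \ref{cycle}, suppose $d-|\sigma|=k$ and $\partial c=0$. The second term vanishes trivially. The terms with $|\tau|<|\sigma|-1$ have $|c_\tau|>k$ and vanish by (4). The terms with $|\tau|=|\sigma|-1$ have $|c_\tau|=k$, and by Lemma \ref{cyclelemma} $c_\tau$ is a cycle, so they vanish by (3). This is the step that needs care: it is exactly why the auxiliary property (3) is carried in the induction and why the cycle lemma was proved. Since $\varphi$ may hit lower strata, one must check that only codimension-one faces produce degree-$k$ chains $c_\tau$, which the dimension count above confirms.

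For \ref{chain1} and \ref{chain2}, we use that $F$ is already a chain map of augmented complexes in degrees $\leq d-1$, by properties (5) and (6) applied to the elements $\tau\times c_\tau$ and $\sigma\times\partial c$, all of degree $d-1$. If $d>1$, then $\partial F(\partial(\sigma\times c))=F(\partial\partial(\sigma\times c))=0$. If $d=1$, then $\varepsilon F(\partial(\sigma\times c))=F(\varepsilon\partial(\sigma\times c))=0$, because $\varepsilon\partial=0$ in the augmented chain complex of $X'$. One should note that when $d-1=0$ these terms are vertex products, which the base case handles. Both identities are applied to elements of total degree $d-1$, so no circularity arises.
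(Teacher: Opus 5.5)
Your proposal is correct and follows the paper's proof essentially step for step. You expand $F\partial(\sigma\times c)$ via (\ref{leibnitzformula}) and use the degree count $|c_\tau|=d-1-|\tau|\geq d-|\sigma|$, with equality exactly for $|\tau|=|\sigma|-1$, together with Lemma \ref{cyclelemma} and the inductive properties (3)--(4), to get (i) and (ii); for (iii) and (iv) you use that $F$ is already an augmented chain map in degrees $\leq d-1$.
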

Let us assume this Lemma for the moment and use it to prove Proposition \ref{assembleprop}. There are three cases, depending on whether $d-|\sigma|$ is less than, equal to, or greater than $k$.

\subsection*{Case 1: $d-|\sigma|<k$} As $C_{d-|\sigma|}(X_{\sigma})$ is a free module, points \ref{chain1} (if $d>1$) and \ref{chain2} (if $d=1$) can be used to apply Lemma \ref{lift} and construct a lift of $F\partial(\sigma\times-):C_{d-|\sigma|}(X_{\sigma})\rightarrow C_{d-1}(BG)$ to $C_d(BG)$. % the third and fourth bullets\footnote{We use the third bullet when $d>1$ and the fourth when $d=1$.} (using Lemma \ref{lift} and the fact that $C_{d-|\sigma|}(X_{\sigma})$ is a free module) imply that $F(\partial(\sigma\times-))$ has a lift to $C_d(BG)$. 

We define $F(\sigma\times -)$ on $C_{d-|\sigma|}(X_{\sigma})$ to be such a lift. Then it satisfies
\begin{equation}
\tag{**}
\label{chain}
\partial F(\sigma\times -)=F\partial(\sigma\times -).
\end{equation}
%so, this choice of $F(\sigma\times-)$ satisfies all the conditions of the Proposition.

\subsection*{Case 2: $d-|\sigma|=k$} Point \ref{cycle} implies that the map $F\partial(\sigma\times -)$ vanishes on the kernel of $\partial$. Hence, this map factors through the image $\partial C_{d-|\sigma|}(X_{\sigma})$ of $\partial$. So, we can write it as a composition 
$$
F\partial(\sigma\times -)=\overline b_{\sigma}\circ\partial(-)
$$
of $\partial$ with a map $\overline b_{\sigma}:\partial C_{d-|\sigma|}(X_{\sigma})\rightarrow C_{d-1}(BG)$. Since $X_{\sigma}$ has at most $n$ $k$-cells, $\partial C_k(X_{\sigma})$ is free by Lemma \ref{freelemma}. Moreover, point \ref{chain1} implies $\partial\overline b_{\sigma}=0$. Therefore, Lemma \ref{lift} implies there is a map $b_{\sigma}:\partial C_{d-|\sigma|}(X_{\sigma})\rightarrow C_d(BG)$, that lifts $\overline b_{\sigma}$, i.e. such that $\partial b_{\sigma}=\overline b_{\sigma}$.

Set $F(\sigma\times-):=b_\sigma\circ\partial(-)$. This vanishes on cycles and satisfies (\ref{chain}).

\subsection*{Case 3: $d-|\sigma|>k$} Point \ref{chainvanish} implies that we can take $F(\sigma\times c):=0$. 

In each case, our choice of $F(\sigma\times -)$ satisfies the conditions of Proposition \ref{assembleprop}. So, to complete the inductive step (and the proof) it remains to prove Lemma \ref{vanishlemma}.

\begin{proof}[Proof of Lemma \ref{vanishlemma}]

%Choose such a lift and denote it by $F(\sigma\times-)$. Then 
%$$
%\partial F(\sigma\times-)=F(\partial(\sigma\times-)).
%$$
Recall the formula (\ref{leibnitzformula}): $$
\partial(\sigma\times c)=\varphi(\partial\sigma\times c)+(-1)^{|\sigma|}\sigma\times \partial c
$$ 
and the definition of the chains $c_{\tau}$ via $$\varphi(\partial\sigma\times c)=\sum_{|\tau|<|\sigma|}\tau\times c_{\tau}.
$$

Applying $F$ to the formula (\ref{leibnitzformula}) we get %for the differential of the product $\sigma\times c$, we get
$$
F\partial(\sigma\times c)=\sum_{|\tau|<|\sigma|-1}F(\tau\times c_{\tau})+\sum_{|\tau|=|\sigma|-1}F(\tau\times c_{\tau})+(-1)^{|\sigma|}F(\sigma\times\partial c). 
$$
We also know (by Lemma \ref{cyclelemma}) that the chains $c_{\tau}$ are cycles if $|\tau|=|\sigma|-1$. 

From this and the inductive hypothesis, we conclude that \begin{itemize}
\item 
$F\partial(\sigma\times c)$ vanishes if
$|c|=k$ and $c$ is a cycle: 
\begin{itemize}
\item 
$F(\tau\times c_{\tau})=0$ since $|\tau\times c_{\tau}|=d-1$ and 
\begin{itemize}
\item 
if $|\tau|=|\sigma|-1$ then $|c_{\tau}|=k$ and $c_{\tau}$ is a cycle by Lemma \ref{cyclelemma}, 
\item
if $|\tau|<|\sigma|-1$ then $|c_{\tau}|>k$. 
\end{itemize}
\item 
$F(\sigma\times\partial c)=0$ since $\partial c=0$.
\end{itemize}
\item 
$F\partial(\sigma\times c)$ vanishes if
$|c|>k$:
\begin{itemize}
\item 
$F(\tau\times c_{\tau})=0$ since $|\tau\times c_{\tau}|=d-1$ and $|c_{\tau}|>k$, and 
\item 
$F(\sigma\times\partial c)=0$ since $|\sigma\times\partial c|=d-1$ and either 
\begin{itemize}
\item 
$|\partial c|=k$ and $\partial c$ is a cycle, or
\item 
$|\partial c|>k$.
\end{itemize}
\end{itemize}
\end{itemize}
This establishes points \ref{cycle} and \ref{chainvanish} of the lemma. 

Finally, note that by the inductive hypothesis we have defined $F$ on $C_{\leq d-1}(X')$ and verified that it is a chain map on this complex. So, since $|\partial(\sigma\times c)|=d-1$, we have 
$$
\partial F\partial(\sigma\times c)=F\partial^2(\sigma\times c)=0
$$
for $d>1$ and 
$$
\varepsilon F\partial(\sigma\times c)=F\varepsilon\partial(\sigma\times c)=0
$$
for $d=1$. This establishes points \ref{chain1} and \ref{chain2} of the lemma.% \ref{vanishlemma}.

%Finally, for $d=1$ we must have $|\sigma|=1$ and $|c|=0$ and it is enough to check for $\sigma=[v,w]$ and $c$ a vertex (with a positive sign in $C_0(X_{[v,w]})$). Then $\partial([v,w]\times c)=v\times c_v-w\times c_w$. Hence $F(\partial(\sigma\times c))=\phi(c_v)-\phi(c_w)$. The augmentation map $\varepsilon$ sends this to zero since $c_v$ and $c_w$ are points in the $G$-cover of $X$ that can be connected by a path.
%\begin{remark}
%There is probably a simpler thing to say for this augmentation argument.
%\end{remark}
%Hence, since $BG$ is aspherical and $C_{d-|\sigma|}(X_{\sigma})$ is a free module, $F(\partial(\sigma\times-))$ lifts to $C_d(BG)$. 

%Finally, let $F(\sigma\times-)$ be a lift of $F(\partial(\sigma\times-))$ that vanishes on boundaries if $d-|\sigma|=k$ and vanishes on all chains if $d-|\sigma|>k$. Such $F$ satisfies the properties in the Proposition. This finishes the proof of the inductive step. 

%So, if $|c|=k$ and $c$ is a boundary, or if $|c|>k$, we may define the lift by $F(\sigma\times c)=0$. This finishes the proof. 
\end{proof}

\section{Proof of Theorem \ref{maptheorem}\label{proofoftheorem}}
%Finally, $F$ is chain homotopic to $\phi\circ\pi:C_*(\Delta X)\rightarrow C_*(X)\rightarrow C_*(BG)$ because of the following lemma.  %$\Phi$ constructed in the same way from $\phi$ (instead of the chain maps produced) because $BG$ is aspherical. (See the lemma  below for a proof.) But, this is a contradiction since $I$ induces a non-trivial map on $H^d(-;V)$ for some $\mathbb K[G]$-module $V$, as $G$ has $\mathbb K$-cohomological dimension $d.$ 
%We will need the following Lemma to compare $F$ and $\phi$. 
In order to prove Theorem \ref{maptheorem} we need to compare two chain maps, the original map $\phi:C_*(X)\rightarrow C_*(BG)$ and the map $F:C_*(X')\rightarrow C_*(BG)$ constructed in the previous section (recall that $F$ vanishes on $C_{>\dim(Z)+k}$). 

First, we use the asphericity of $BG$ to prove: 
\begin{lemma}
\label{chainhomotopiclemma}
Suppose $f$ and $g$ are two chain maps $C_*(Y)\rightarrow C_*(BG)$ of augmented chain complexes that agree on $C_{-1}$. Then $f$ and $g$ are chain homotopic. 
\end{lemma}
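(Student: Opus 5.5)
We construct a chain homotopy $h:C_*(Y)\rightarrow C_{*+1}(BG)$ with $\partial h+h\partial=f-g$ degree by degree. The argument is the standard fundamental-lemma-of-homological-algebra argument. It uses only two facts. First, $C_*(Y)=C_*(Y;\mathbb K[G])$ is a complex of free $\mathbb K[G]$-modules in nonnegative degrees, since cells of $Y$ lift to free $G$-orbits of cells in the cover. Second, the augmented complex $C_*(BG)\rightarrow\mathbb K\rightarrow0$ is exact because $BG$ is aspherical, as in the proof of Lemma \ref{lift}. Set $h_{-1}=0$ on $C_{-1}=\mathbb K$; this is consistent with $f=g$ on $C_{-1}$. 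The goal is to define $h_d:C_d(Y)\rightarrow C_{d+1}(BG)$ for $d\geq0$ satisfying
$$\partial h_d=f_d-g_d-h_{d-1}\partial ,$$
where for $d=0$ the term $h_{-1}\partial=h_{-1}\varepsilon$ vanishes.

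For $d=0$, put $\psi_0=f_0-g_0:C_0(Y)\rightarrow C_0(BG)$. Since $f$ and $g$ are maps of augmented complexes agreeing on $C_{-1}$, we get $\varepsilon\psi_0=f_{-1}\varepsilon-g_{-1}\varepsilon=0$. Lemma \ref{lift} with $d=1$ and $V=C_0(Y)$ free then gives $h_0$ with $\partial h_0=\psi_0$.

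For the inductive step with $d\geq1$, assume $h_{d-1}$ and $h_{d-2}$ have been built. Put $\psi_d=f_d-g_d-h_{d-1}\partial:C_d(Y)\rightarrow C_d(BG)$. Using that $f,g$ are chain maps and the inductive identity $\partial h_{d-1}=f_{d-1}-g_{d-1}-h_{d-2}\partial$, we compute
$$\partial\psi_d=(f_{d-1}-g_{d-1})\partial-(f_{d-1}-g_{d-1}-h_{d-2}\partial)\partial=h_{d-2}\partial^2=0.$$
Lemma \ref{lift}, applied with target degree $d+1>1$ and free domain $C_d(Y)$, then gives $h_d$ with $\partial h_d=\psi_d$. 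This closes the induction and produces the required chain homotopy.

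There is no real obstacle here; the argument is routine. The points that need care are the bookkeeping at the bottom of the induction, namely that the augmentation hypothesis gives $\varepsilon\psi_0=0$ so that Lemma \ref{lift} applies in its $d=1$ form, and the fact that $C_d(Y)$ is free over $\mathbb K[G]$, which is where the choice of reference map $Y\rightarrow BG$ defining the coefficients enters.
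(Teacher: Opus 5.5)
Your proposal is correct and follows essentially the same route as the paper: you build $h_0$ as a lift of $f_0-g_0$ using $\varepsilon(f_0-g_0)=0$, then inductively lift $f_d-g_d-h_{d-1}\partial$ after checking its boundary is $h_{d-2}\partial^2=0$. Both arguments rely only on freeness of $C_*(Y)$ and exactness of the augmented complex of $BG$; your convention $h_{-1}=0$ merely merges into one formula the two cases ($i>0$ and $i=0$) that the paper treats separately.
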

\begin{proof}
We have $f_{-1}-g_{-1}=0$. To construct a chain homotopy, we need to define $h_{\geq 0}$ in such a way that $f_0-g_0=\partial h_0$ and for each $i>0$ we have $f_i-g_i=\partial h_i+h_{i-1}\partial$. 

To that end, note that $\varepsilon(f_0-g_0)=(f_{-1}-g_{-1})\varepsilon=0$ implies (since $C_0(Y)$ is free and $BG$ is aspherical) that we can define $h_0$ as a lift of $f_0-g_0$.

Suppose we have constructed $h_{\leq i}$. % satisfying $f_i-g_i=\partial h_i+h_{i-1}\partial$. 
Then 
$$
\partial(f_{i+1}-g_{i+1}-h_i\partial)=(f_i-g_i-\partial h_i)\partial=\left\{\begin{array}{ccc}h_{i-1}\partial^2&\mbox{ if }&i>0,\\(f_0-g_0-\partial h_0)\partial &\mbox{ if }& i=0, 
\end{array}\right.
$$
and in both cases the right hand side is equal to zero. 
So (since $C_{i+1}(Y)$ is free and $BG$ is aspherical) we can define $h_{i+1}$ as a lift of $f_{i+1}-g_{i+1}-h_i\partial$.
\end{proof}
\begin{remark}%[Induced maps of chain complexes]
Two maps $\varphi$ and $\psi$ of spaces $X\rightarrow BG$, induce chain maps (of augmented complexes) $C_*^{\varphi}(X)\rightarrow C_*(BG)$ and $C_*^{\psi}(X)\rightarrow C_*(BG)$ that are the identity on $C_{-1}$. If the domains are the same (that is, if $\varphi$ and $\psi$ induce the same $G$-covers of $X$) then Lemma \ref{chainhomotopiclemma} implies the induced maps of chain complexes are chain homotopic. 
\end{remark}
\begin{remark}
The combination of Lemmas \ref{vertexchainmap} and \ref{chainhomotopiclemma} implies Theorem \ref{essentialtheorem}.
\end{remark}

%\subsection*{Proof of Theorem \ref{maptheorem}}
Now, we are ready to put together our results to prove Theorem \ref{maptheorem}.

The map $p:X\rightarrow Z$ is a PL map of simplicial complexes, so, replacing $X$ and $Z$ by appropriate subdivisions, we may assume that $p$ is a simplicial map. For a simplex $\sigma$ in $Z$, pick $z\in\sigma^\circ$ and put $X_\sigma=p^{-1}(z)$ (over the open simplex $\sigma^\circ$, $p$ is a trivial fibration, so different choices of $z$ give homeomorphic fibres). By Proposition \ref{PLmap}, $p$ is a cellular singular fibration. By hypothesis, each $X_{\sigma}$ admits a cell structure with at most $n$ cells of dimension $k$. By Proposition \ref{cellularprop}, we can use these cell structures to construct a cell complex $X'$ together with homotopy equivalences $R:X'\rightarrow X$, and $L:X\rightarrow X'$ such that 
\begin{itemize}
\item 
$X'$ has a filtration by subcomplexes $X_0=X_0'\subset X_1'\subset\dots$,
\item
$X_0'=X_0=\coprod_{v}v\times X_v$,
\item 
$X_i'\setminus X'_{i-1}=\coprod_{|\sigma|=i}\sigma^{\circ}\times X_{\sigma}$, and
\item
$R:X'\rightarrow X$ is a homotopy equivalence relative to $X_0=X_0'$ and $L:X\rightarrow X'$ is a homotopy inverse of $R$ relative to $X_0$. 
\end{itemize}
Now, we use the reference map $\phi R:X'\rightarrow BG$ to construct the cellular chain complex with $\mathbb K[G]$-coefficients on $X'$. Since each $X_v$ has at most $n$ cells in degree $k$, Lemma \ref{vertexchainmap} constructs a chain truncation $f_v:C_*(X_v)\rightarrow C_*(BG)$ that agrees with the chain map induced by a cellular approximation $\phi_v$ of $\phi R\mid _{X_v}$ below degree $k$ and vanishes above degree $k$. Using the fact that each $X_{\sigma}$ has at most $n$ $k$-cells, Proposition \ref{assembleprop} assembles the maps $f_v$ into a chain map $F:C_*(X')\rightarrow C_*(BG)$ that vanishes above dimension $k+\dim(Z)$. Precomposing it with a chain map induced by $L:X\rightarrow X'$, we obtain a chain map $FL:C^{\phi RL}_*(X)\rightarrow C_*(BG)$ that vanishes above dimension $k+\dim(Z)$. 
%So, as $L:X\rightarrow X'$ is a homotopy inverse to $R$ that restricts to the identity on $X_0$, $FL:C_*(X)\rightarrow C_*(BG)$ is a chain map that vanishes above dimension $k+\dim(Z)$. 
Here, $C^{\phi RL}_*(X)$ is the chain complex defined by the cell structure given by the triangulation of $X$ and the reference map $\phi RL$. This reference map is different from $\phi:X\rightarrow BG$. However, since $RL$ is homotopic to $\mathrm{id}_X$ via a homotopy that does not move $X_0$, the maps $\phi$ and $\phi RL$ define the same $G$-covers of $X$, and hence $C^{\phi}_*(X)=C_*^{\phi RL}(X)$. So, $FL$ and $\phi$ are chain maps defined on the same chain complex, and we can compare them via Lemma \ref{chainhomotopiclemma}. Since both are the identity on $C_{-1}$, we conclude that they are chain homotopic. So, we have shown that $\phi$ is chain homotopic to a chain map whose image is zero above dimension $k+\dim(Z)$. This finishes the proof of Theorem \ref{maptheorem}.

\section{Examples}
We end this paper with several examples illustrating phenomena that occur when the injectivity radius is uniformly bounded. %we do not impose assumptions of large injectivity radius.  %illustrative examples %addressing the difference between triangulations and cell structures, what form a topological waist inequality can and cannot take.

\subsection{Large volume/small fibers\label{largevol}} First off, if one has a hyperbolic manifold $M$ with non-trivial $H^1(M)$ (e.g. if $M$ is orientable and has an orientable non-separating hypersurface), then one can use the corresponding map $M\rightarrow S^1$ to construct cyclic covers of $M$. Let $S^1(r)\rightarrow S^1$ be the $r$-fold cyclic cover and $M(r)\rightarrow M$ its pullback to $M$. Then $M(r)\rightarrow S^1(r)$ has the same fibers as $M\rightarrow S^1$. Squashing $S^1(r)$ to $[0,1]$ via an at most two-to-one map, one gets $F:M(r)\rightarrow S^1(r)\rightarrow [0,1]$ with uniformly bounded fibers (independent of $r$), and volume $r \times \mbox{vol}(M)$. 
\subsection{Many simplices/few cells\label{simpvscell}} There are hyperbolic $3$-manifolds as above such that $M^3\rightarrow S^1$ is a fibre bundle. Suppose the fibre is a genus $g$ surface $S_g$. Then $M$ is a mapping torus $T(f)$ of a surface homeomorphism $f:S_g\rightarrow S_g$ and $M(r)$ is the mapping torus of the $r$-fold iterate $f^r$. Pick a cell structure on $S_g$ and a cellular map $h(r)$ homotopic to $f^r$. Then the mapping torus $T(h(r))$ of $h(r)$ is a cell complex homotopy equivalent to $M(r)$ with twice as many cells as $S_g$ (it has a $(k+1)$-cell and a $k$-cell for each $k$-cell of $S_g$). So, $T(h(r))$ is a homotopy model for $M(r)$, there is a uniform bound on the number of cells of $T(h(r))$ that is independent of $r$, 
but the minimal number of simplices in any triangulation of $M(r)$ grows with $r$ since the simplicial volume of the hyperbolic manifold $M(r)$ is proportional to $\mathrm{vol}(M(r))$, which goes to infinity.

\subsection{Large $\pi_1$-rank/small fibers\label{largerank}} We can improve on the first example when there is a $\pi_1$-surjective map $M\rightarrow S^1 \vee S^1$ to a wedge of two circles (which is often true for hyperbolic manifolds \cite{lubotzky}). In that case, take a ``thin'' $r$-fold cover $X(r)\rightarrow S^1 \vee S^1$, e.g. a cover that unwraps one circle but not the other. Such an $X(r)$ maps to $[0,1]$ with fibers that are at most six-to-one. So, if $M(r)\rightarrow M$ is the corresponding cover, then $F:M(r)\rightarrow X(r)\rightarrow [0,1]$ has uniformly bounded fibers, and the fundamental group of $M(r)$ maps onto the fundamental group of $X(r)$, a free group of rank $r+1$, so $M(r)$ has more than $r$ $1$-cells.

\subsection{Maps to $\mathbb R^2$\label{complex}} A variation of this also works for some complex hyperbolic $4$ or $6$-manifolds. Some of these have holomorphic, $\pi_1$-surjective maps to higher genus surfaces $M\rightarrow S_g$ (see \cite{deraux}). An $r$-fold cover $S_g(r)\rightarrow S_g$ is a surface so it maps to the square $[0,1]^2$ via an at most two-to-one map. So, the composition $M(r)\rightarrow S_g(r)\rightarrow [0,1]^2$ has uniformly bounded fibers and $\pi_1$-rank at least twice the genus of the surface $S_g(r)$. In the $\dim M=4$ case, $b_2(M(r))$ also grows linearly in the volume, since $\chi(M^4)$ is a positive integer and hence $\chi(M(r))=r\times \chi(M)\geq r$.

\subsection{On topological waist inequalities} Examples \ref{largevol} and \ref{largerank} tell us that in the hyperbolic setting there can be no topological waist inequalities (for PL maps $F:M^d\rightarrow \mathbb R^m$ from a hyperbolic manifold) of the form
$$
\max_{z\in\mathbb R^m} \mbox{(simplicial volume of } F^{-1}(z)) > C \times\mbox{(simplicial volume of } M)^D,
$$
$$
\max_{z\in\mathbb R^m} \mbox{(number of cells of } F^{-1}(z)) > C\times\mbox{(number of cells of } M)^D
$$
with positive constants $C,D$ that do not depend on $M$ but only on the dimensions $d$ and $m$.
\begin{remark}
For higher rank irreducible lattices there are no constructions analogous to \ref{largevol}-\ref{complex} because of the Margulis normal subgroup theorem. 
\end{remark}
On the other hand Gromov \cite{gromovexpanders} (for $k=1$) and the present paper (for $0<k<d-m$) imply for a PL map $F:M^d\rightarrow \mathbb R^m$ from a hyperbolic $d$-manifold that
$$
\max_{z\in\mathbb R^m} \mbox{(number of }k\mbox{-cells of } F^{-1}(z)) > C \times {\mathrm{inj}(M)\over\log(\mathrm{inj}(M))}.
$$
Conjecturally (see 2.4 of \cite{gromovexpanders}), the right hand side can be replaced with $C\cdot e^{D\cdot \mathrm{inj}(M)}$.

\bibliography{trees}
\bibliographystyle{amsplain}

\end{document}